
\documentclass{amsart}

\usepackage{geometry}
\geometry{left=1in,right=1in,top=1in,bottom=1in}

\usepackage{amsfonts,amsmath,amssymb,amsthm}


\theoremstyle{plain} \newtheorem{cor}{Corollary}[section]
\theoremstyle{plain} 
\theoremstyle{plain} \newtheorem{thm}[cor]{Theorem}
\theoremstyle{definition} \newtheorem{defn}[cor]{Definition}
\theoremstyle{plain} 
\theoremstyle{plain}  
\theoremstyle{definition} 
\theoremstyle{plain}  
\theoremstyle{definition} 
\theoremstyle{definition} 
\theoremstyle{definition} 
\theoremstyle{definition} 
\theoremstyle{definition} \newtheorem{ex}[cor]{Example}

\newcommand{\btablesize}{\begin{small}}
\newcommand{\etablesize}{\end{small}}
\newcommand{\Zn}{\mathbb Z_n}
\newcommand{\Z}{\mathbb Z}
\newcommand{\C}{\mathbb C}
\newcommand{\dom}{\textup{dom}}
\newcommand{\ran}{\textup{ran}}
\newcommand{\rk}{\textup{rk}}
\newcommand{\Y}{\mathcal Y}
\newcommand{\D}{\mathcal D}
\newcommand{\N}{\mathbb N}
\newcommand{\ip}[1]{\left\langle #1 \right\rangle}
\newcommand{\ld}{\lfloor}
\newcommand{\rd}{\rfloor}
\newcommand{\IRR}{\textup{IRR}}
\newcommand{\perm}{\textup{perm}}

\usepackage{lineno}

\begin{document}



\title{Inverse semigroup spectral analysis for partially ranked data}

\author{Martin E. Malandro}
\thanks{The author was partially supported by AFOSR under grant FA9550-06-1-0027.}

\email{malandro@shsu.edu}

\address{Box 2206, Department of Mathematics and Statistics, Sam Houston State University, Huntsville, TX 77341-2206, USA. Phone: 1-936-294-1580. Fax: 1-936-294-1882.}

\date{\today}

%

\begin{abstract}
Motivated by the notion of symmetric group spectral analysis developed by Diaconis, we introduce the notion of spectral analysis on the rook monoid (also called the symmetric inverse semigroup), characterize its output in terms of symmetric group spectral analysis, and provide an application to the statistical analysis of partially ranked (voting) data. We also discuss generalizations to arbitrary finite inverse semigroups. This paper marks the first non-group semigroup development of spectral analysis.
\end{abstract}

\keywords{Spectral analysis, Fourier transform, rook monoid, inverse semigroup, partial ranking}
\subjclass[2010]{20M18, 43A65, 62-07}


\maketitle

\section{Introduction}

Spectral analysis is a model-free, symmetry-based approach to the statistical exploration and description of datasets. In \cite{Persi}, P.\ Diaconis gives a method for the spectral analysis of partially ranked voting data using symmetric groups. Briefly, suppose an $n$-candidate election is being held and each voter is asked to rank as many candidates as he wishes in order of preference, from position $1$ (best) to position $k$ (worst among the candidates the voter chooses to rank).
His technique begins by viewing this collection of partial rankings as elements of certain symmetric group modules and taking orthogonal projections of these elements onto the minimal invariant subspaces of these modules. Such projections give a complete, non-redundant description of the dataset. To provide easily understood statistics, inner products of these projections with certain ``easily interpretable" functions are computed. The resulting statistics constitute the {\em spectral analysis} (or the {\em symmetric group spectral analysis}) of the dataset. His approach was the first non-abelian finite group generalization of the usual Fourier (or spectral) analytic techniques of time series, based on the abelian group $\Zn$.

While the symmetric group acts naturally on the set of all partial rankings of $n$ objects, the rook monoid $R_n$ {\em is} this set of partial rankings. In this paper we explain the algebra of the rook monoid and generalize the notion of symmetric group spectral analysis to this new setting. Complications arise because the rook monoid is not a group, and while rook monoid modules decompose into minimal invariant subspaces, they are not necessarily orthogonal under the natural inner product. We define the notion of {\em rook monoid spectral analysis}, resolve these complications, and give a complete description of its output in terms of symmetric group spectral analysis. 

The main contributions of the present work are the following.
First, the algebra $\C R_n$ provides a natural framework for the encoding and analysis of partially ranked data for {\em all} partial rankings, not just rankings of objects in positions $1$ through $k$.
Second, we define and analyze two approaches to rook monoid spectral analysis. 
%
Under the {\em groupoid basis association} (defined in Section \ref{SecGroupoidBasisRook}), we show in Theorem \ref{ThmMainThm} that rook monoid spectral analysis offers a more local, granular approach to the statistical analysis of partially ranked (but not fully ranked) data than symmetric group spectral analysis does, in that it amounts to the partitioning of 
a dataset
by rank, domain, and range, followed by symmetric group spectral analysis using appropriately-sized symmetric groups on each piece of the partition. We reinforce this with an example in Section \ref{SecRookExample}.
Under the {\em semigroup basis association} (defined in Section \ref{SecRepDefs}), we show in Theorem \ref{ThmSpectralUnderSemigroupBasisAssoc} that rook monoid spectral analysis offers a hierarchical approach to the statistical analysis of partially ranked data, in that 
it amounts to, for each pair of subsets $D$ and $R$ of $\{1,2,\ldots,n\}$ such that $|D|=|R|$,
a symmetric group spectral analysis, using an appropriately-sized symmetric group, of the partial rankings in the dataset whose domains extend $D$ and whose ranges extend $R$.
Finally, we discuss generalizations to other semigroups. 

We proceed as follows.
In Section \ref{SecPartialRankings} we review basic facts about partial rankings, the rook monoid, and inverse semigroups. 
In Section \ref{SecRepDefs} we use basic ideas from the representation theory of inverse semigroups to define the Fourier transform on a finite inverse semigroup and we explain how this definition gives rise to two important examples of group-based spectral analysis---time series and the symmetric group spectral analysis of Diaconis.

In Section \ref{SecRookSpectral} we extend group-based spectral analysis to the rook monoid and we discuss extensions to inverse semigroups in general. In Section \ref{SecPreliminaries} we discuss how to perform symmetric group spectral analysis using appropriately-sized symmetric groups on each part of the partition (by rank, domain, and range) of a dataset consisting of partial rankings and we organize the goals for the rest of the paper. In Section \ref{SecGroupoidBasisRook} we review the groupoid basis of $\C R_n$ and we define the groupoid basis association. In Section \ref{SecOrthogIsotypic} we describe an inner product under which the isotypic subspaces of $\C R_n$ are mutually orthogonal, we describe easily interpretable functions for partially ranked data, and we describe the isotypic subspaces of $\C R_n$ in terms of the natural statistical information they carry. We develop rook monoid spectral analysis under the groupoid basis association and characterize the statistics it generates in Section \ref{SecRookSpectralGroupoidBasisAssoc}, and we look at an example in Section \ref{SecRookExample}. We then consider rook monoid spectral analysis under the semigroup basis association and characterize the statistics it generates in Section \ref{SecRookSpectralSemigroupBasisAssoc}. In Section \ref{SecNaturalInnerProd} we look at what happens if we try to use the natural inner product instead of the inner product introduced in Section \ref{SecOrthogIsotypic}. Section \ref{SecConclusion} contains thoughts on directions for future research.

Our development relies on several results from the representation theory of inverse semigroups, which are included in appendices.
In 
Appendix \ref{AppendixBasicRepTheory} 
we review the basic definitions from the representation theory of inverse semigroups. In 
Appendix \ref{SecGroupoidBasis} 
we review results of B.\ Steinberg \cite{Steinberg2} on the groupoid basis of an inverse semigroup algebra and the decomposition of an inverse semigroup algebra into a direct sum of matrix algebras over group algebras. Our results in Section \ref{SecOrthogIsotypic} and our proofs of Theorems \ref{ThmMainThm} and \ref{ThmSpectralUnderSemigroupBasisAssoc} are based on theorems for inverse semigroup algebras in general, which we state and prove in 
Appendix \ref{SecAppendixIsotypics}.


\section{Partial rankings}
\label{SecPartialRankings}

Suppose a five-candidate election is being held and you, as a voter, are asked to rank as many of these five candidates as you wish in any positions. The candidates are labeled 1 through 5. If you prefer candidate 4 in first position, candidate 5 in second, candidate 1 in third, candidate 3 in fourth, and candidate 2 in fifth, your vote would be
\[
\pi=\left(
\begin{array}{ccccc}
1&2&3&4&5\\
3&5&4&1&2\\
\end{array}
\right),
\]
a permutation on $\{1,2,3,4,5\}$. We will write our permutations with the domain on the top row and corresponding images on the bottom row, so here, for example, $\pi(3)=4$. If, on the other hand, you wished to express the same preference as above for candidates 2, 4, and 5, without ranking candidates 1 and 3, your vote would be
\[
\sigma=\left(
\begin{array}{ccccc}
1&2&3&4&5\\
-&5&-&1&2\\
\end{array}
\right),
\]
a {\em partial ranking} on $\{1,2,3,4,5\}$. The dashes in the second row indicate that 1 and 3 are not in the domain of $\sigma$. The ranking $\sigma$ indicates a preference for candidates 4 and 5 and a strong distaste for candidate 2, without committing to a ranking of the intermediate candidates 1 and 3. 

\begin{defn}
A {\em partial ranking $\sigma$ on $\{1,2,\ldots,n\}$} is an injective partial function from $\{1,2,\ldots,n\}$ to $\{1,2,\ldots,n\}$. The {\em domain} of $\sigma$, denoted $\dom(\sigma)$, is the set of elements $k\in\{1,2,\ldots,n\}$ for which $\sigma(k)$ is defined. The {\em range} of $\sigma$, denoted $\ran(\sigma)$, is $\{\sigma(k):k\in\dom(\sigma)\}$. The {\em rank} of $\sigma$, denoted $\rk(\sigma)$, is $|\dom(\sigma)|=|\ran(\sigma)|$.
\end{defn}

For the choice of $\sigma$ above, we have $\dom(\sigma)=\{2,4,5\}$, $\ran(\sigma)=\{1,2,5\}$, and $\rk(\sigma)=3$. 
We use the usual operation of partial function composition and we adopt the convention that maps act on the left of sets and are composed right-to-left: If $\sigma, \gamma$ are partial rankings on $\{1,2,\ldots,n\}$, then $\sigma\circ\gamma$ is the partial ranking on $\{1,2,\ldots,n\}$ whose domain is the set of elements $k$ for which $k\in \dom(\gamma)$ and $\gamma(k)\in\dom(\sigma)$, and if $k\in \dom(\sigma\circ\gamma),$ then $(\sigma\circ\gamma)(k)=\sigma(\gamma(k))$.

A {\em semigroup} is a nonempty set with an associative binary operation. A {\em monoid} is a semigroup with an identity element. Unless otherwise specified, we will write our semigroup operations multiplicatively.

\begin{defn}For an integer $n\geq 0$, the {\em rook monoid} $R_n$ is the set of all partial rankings on $\{1,2,\ldots,n\}$ under the operation of partial function composition.
\end{defn}

It is easy to check that $R_n$ is a monoid. We denote the symmetric group on $\{1,2,\ldots,n\}$ by $S_n$. $R_n$ contains $S_n$ as the set of elements of rank $n$, and the identity element for the operation on $R_n$ is the identity of $S_n$. In fact, $R_n$ contains isomorphic copies of all $S_k$, for $0\leq k\leq n$, which are obtained by identifying $S_k$ with the set of elements of $R_n$ whose domain and range are both $\{1,2,\ldots,k\}$.

\begin{defn}If $S$ and $T$ are semigroups, then a {\em semigroup homomorphism} (or just {\em homomorphism}) $\phi$ from $S$ to $T$ is a map $\phi:S\rightarrow T$ such that $\phi(ab)=\phi(a)\phi(b)$ for all $a,b\in S$. A {\em semigroup isomorphism} (or just {\em isomorphism}) is a semigroup homomorphism that is one-to-one and onto.
\end{defn}

Another way to view $R_n$ is as the set of all $n\times n$ matrices that have {\em at most} one 1 in each row and column (their other entries being 0), under the usual operation of matrix multiplication. Such matrices are called {\em rook matrices}. Given a partial ranking $\sigma\in R_n$, we can create an $n\times n$ rook matrix by placing a $1$ in the $i,j$ position whenever $\sigma(j)=i$ and having all other entries be 0. It is clear that this association is an isomorphism, and furthermore that the rank of a partial ranking $\sigma$ is the same as the rank of its associated rook matrix. $R_n$ is called the rook monoid because the collection of $n\times n$ rook matrices corresponds to the set of possible placements of non-attacking rooks on an $n\times n$ chessboard.

Although $R_n$ is not a group (unless $n=0$, in which case $R_0\cong \Z_1$), $R_n$ does have a nice algebraic structure---that of an inverse semigroup \cite{CliffPres}.

\begin{defn}An {\em inverse semigroup} is a semigroup $S$ with the property that, for each $x\in S$, there exists a {\em unique} $y\in S$ such that $xyx=x$ and $yxy=y$. In this case, $y$ is said to be the {\em inverse} of $x$, and we write $x^{-1}=y$.
\end{defn}

It follows that, in an inverse semigroup, if $x^{-1}=y$ then $y^{-1}=x$, $xx^{-1}$ is idempotent, and if $e$ is idempotent then $e^{-1}=e$.
Every group is an inverse semigroup, but not conversely. Also, we have emphasized the word {\em unique} in this definition, as uniqueness of an element's inverse does not follow from the rest of the hypotheses as it does for groups. For example, for $n\geq 2$, in $T_n$, the {\em full transformation semigroup on $n$ elements} (the set of all functions from $\{1,2,\ldots,n\}$ to $\{1,2,\ldots,n\}$ under function composition), for each element $x$ there is at least one $y$ such that $xyx=x$ and $yxy=y$, and there exist elements $x$ for which there are multiple elements $y$ satisfying both equations. 

It is easy to see that the inverse of an element $\sigma\in R_n$ is the partial ranking $\gamma$ whose domain is $\ran(\sigma)$, and whose definition (informally) is given by sending everything in $\ran(\sigma)$ back where it came from. Viewing the elements of $R_n$ as rook matrices, the inverse of a rook matrix is its transpose. 
%

\section{Representations and spectral analysis}


\label{SecRepDefs}

Our development of spectral analysis depends on the representation theory of inverse semigroups. The basic definitions are similar to those for groups, and are included in 
Appendix \ref{AppendixBasicRepTheory} 
for the convenience of the reader. Let $S$ be a finite inverse semigroup and let $\C S$ denote the complex algebra of $S$. 

\begin{defn}
The natural basis of $\C S$, i.e., the basis $\{s\}_{s\in S}$, is called the {\em semigroup basis} of $\C S$.
\end{defn}

Elements of $\C S$ can be identified with complex-valued functions on $S$ in a natural way. Specifically, if $f:S\rightarrow \C$, then $f$ corresponds to the element $\sum_{s\in S}f(s)s \in \C S$. $\C S$ can therefore be seen as the algebra of complex-valued functions on $S$. This association between functions on $S$ and elements of $\C S$ is called the {\em semigroup basis association}. There is, for non-group inverse semigroups in general, a different natural basis of $\C S$ and therefore another natural way to associate functions on $S$ and elements of $\C S$, called the {\em groupoid basis association}, which we define in Section \ref{SecGroupoidBasisRook}.

$\C S$ is semisimple. When $S$ is a group, this is Maschke's theorem \cite{DennisFarb}. For general $S$, this is a result of Munn \cite[Theorem 4.4]{Munn}. 
Since $\C S$ is semisimple, Wedderburn's theorem applies to $\C S$. Semisimplicity and the Wedderburn isomorphism are the key ingredients for the spectral analysis we develop.

If $f\in \C S$ and $\rho$ is a matrix representation of $\C S$, denote $\rho(f)$ by $\hat f(\rho)$.

\begin{thm}[Wedderburn's theorem] 
\label{ThmWedderburn}
Let $\Y$ be a complete set of inequivalent, irreducible matrix representations of $\C S$. Then $\Y$ is finite, and the map
\begin{equation}
\label{EqWedderburn}
\bigoplus_{\rho\in\Y}:\C S\rightarrow \bigoplus_{\rho\in\Y} M_{d_\rho}(\C)
\end{equation}
is an isomorphism of algebras. Explicitly, if $f\in \C S$, with $f=\sum_{s\in S}f(s)s$, then
\[
f\mapsto \bigoplus_{\rho\in\Y}\hat f(\rho) = \bigoplus_{\rho\in\Y}\sum_{s\in S}f(s)\rho(s)
\]
in this isomorphism.
\end{thm}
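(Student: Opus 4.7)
The plan is to invoke the classical Artin--Wedderburn structure theorem for finite-dimensional semisimple algebras, for which both hypotheses are already in hand: $\C S$ is finite-dimensional because $S$ is finite, and $\C S$ is semisimple by Munn's theorem, cited immediately before the statement. Artin--Wedderburn decomposes $\C S$ as a direct sum of finitely many minimal two-sided ideals $I_1, \ldots, I_r$, each isomorphic to a matrix algebra $M_{d_i}(D_i)$ over a division $\C$-algebra $D_i$. Since $\C$ is algebraically closed and each $D_i$ is finite-dimensional over $\C$, a standard Schur-lemma argument (any element of $D_i$ satisfies a polynomial over $\C$, which splits into linear factors, and $D_i$ has no zero divisors) forces $D_i = \C$. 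Hence $\C S \cong \bigoplus_{i=1}^r M_{d_i}(\C)$ as algebras, and finiteness of the index set is automatic.

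Next I would identify the summands with a complete set of inequivalent irreducible matrix representations. The projection onto the $i$-th factor gives a matrix representation $\rho_i:\C S \to M_{d_i}(\C)$, and it is irreducible because $M_{d_i}(\C)$ acts irreducibly on its defining module $\C^{d_i}$. For $i\neq j$, the representations $\rho_i$ and $\rho_j$ are inequivalent: $\rho_i$ sends $I_i$ isomorphically onto $M_{d_i}(\C)$ but annihilates every $I_j$ with $j\neq i$, while $\rho_j$ does the opposite, so their kernels differ. Completeness then follows by pulling back simple modules along the Wedderburn isomorphism: the simple modules of $\bigoplus_i M_{d_i}(\C)$ are exactly the standard modules $\C^{d_i}$ (one for each factor), so every simple $\C S$-module is equivalent to some $\rho_i$, and we may take $\Y = \{\rho_1, \ldots, \rho_r\}$.

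Finally, the explicit formula follows from linearity: the map $f \mapsto \bigoplus_{\rho\in\Y}\hat f(\rho)$ agrees on the semigroup basis with $s \mapsto \bigoplus_{\rho\in\Y}\rho(s)$, which by construction is the Wedderburn isomorphism and hence an algebra isomorphism. The one step that requires genuine care is the completeness assertion in the previous paragraph---showing that every irreducible representation arises as one of the $\rho_i$---but this is a routine consequence of semisimplicity and Schur's lemma once the Artin--Wedderburn decomposition is in place; everything else amounts to recalling standard structure theory for finite-dimensional semisimple $\C$-algebras.
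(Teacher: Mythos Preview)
The paper does not supply its own proof of this statement; it is quoted as the classical Wedderburn theorem, with semisimplicity of $\C S$ attributed to Munn. Your argument is the standard textbook route and is essentially correct.

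One small point deserves attention. The theorem as stated begins with an \emph{arbitrary} complete set $\Y$ of inequivalent irreducible matrix representations and asserts that the map $\bigoplus_{\rho\in\Y}\rho$ is an isomorphism. Your argument constructs one particular such set---the projections $\rho_1,\ldots,\rho_r$ onto the Artin--Wedderburn factors---and verifies the isomorphism for that set. To close the gap, note that any other complete set $\Y'=\{\rho_1',\ldots,\rho_r'\}$ is, after a permutation, termwise equivalent to yours, so there exist invertible matrices $A_i$ with $\rho_i'(f)=A_i\rho_{\sigma(i)}(f)A_i^{-1}$ for all $f$. The map $\bigoplus_i\rho_i'$ is then your isomorphism followed by a block permutation and blockwise conjugation, hence still an algebra isomorphism. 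This also shows that any complete set $\Y$ is finite of cardinality $r$. The step is routine, but since the hypothesis quantifies over all $\Y$ it should be said explicitly.
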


\begin{defn}Given $f\in \C S$ and a complete set of inequivalent, irreducible matrix  representations $\Y$ of $\C S$, the {\em Fourier transform of $f$ according to $\Y$} (or just the Fourier transform of $f$) is the image of $f$ in the Wedderburn isomorphism (\ref{EqWedderburn}). 
\end{defn}

\begin{defn}The inverse image of the natural basis of the algebra on the right in the Wedderburn isomorphism (\ref{EqWedderburn}) (that is, the set of matrices in this algebra which have a 1 in one position and 0 in all other positions)  is called the {\em Fourier basis of $\C S$ according to $\Y$.}
\end{defn}

Thus the Fourier transform of $f$ is, in general, a block diagonal matrix with complex entries, and we can view the Fourier transform of $f$ according to $\Y$ as a change of basis within $\C S$, from the natural basis $\{s\}_{s\in S}$ of $\C S$, to the Fourier basis of $\C S$ according to $\Y$.

\label{SecSpectralDefs}

Fourier transforms are closely related to the notions of spectral analysis. We begin by seeing how these notions 
apply to time series.

\begin{ex}[Time series] 
\label{ExTimeSeries}
Let $S=(\Zn,+)=\{0,1,\ldots,n-1\}$, the cyclic group of order $n$.
The irreducible representations of $\C \Z_n$ are all one-dimensional---they are the characters $\chi_k$ for $k\in\{0,\ldots,n-1\}$, defined on the natural basis of $\C \Z_n$ by $\chi_k(t)=e^{{2\pi i k t}/{n}}$.
If $f:\Z_n\rightarrow \C$ and we view $f\in \C \Z_n$ as $f=\sum_{t=0}^{n-1}f(t)t$, then
\[
\hat f(\chi_k)=\sum_{t=0}^{n-1}f(t)e^{{2 \pi i k t}/{n}},
\]
the familiar discrete Fourier transform of $f$. The Fourier basis of $\C \Zn$ is the usual basis of sampled exponentials $\{b_k\}_{k=0}^{n-1}\subset \C \Zn$:
\[
b_k= \frac{1}{n}\sum_{t=0}^{n-1}e^{{-2\pi i k t}/{n}}t.
\]
\end{ex}

We now explain how things generalize beyond $S=\Zn$. We can often view a dataset as an element of some $\C S$-module for some finite inverse semigroup $S$. Let $S$ be a finite inverse semigroup and let $f$ be a dataset, viewed in some way as an element of some left $\C S$-module $M$. Since $\C S$ is semisimple, $M$ decomposes into a direct sum of irreducible $\C S$-submodules $M_i$:
\[
M=M_1\oplus M_2\oplus \cdots \oplus M_k.
\]
Unfortunately, the $M_i$ are not uniquely determined in general. For any irreducible submodule $N$ of $M$, whether or not $N$ appears as a direct summand in this particular decomposition, let $V_N$ denote the sum of all irreducible submodules of $M$ isomorphic to $N$. $V_N$ is called the {\em isotypic component of $M$ of type (or isomorphism class) $N$}. As $N$ ranges across the irreducible submodules of $M$, we obtain the {\em isotypic components} $V_N$ of $M$. They are uniquely determined, and $M$ decomposes as the direct sum of them. Furthermore, given any decomposition of $M$ into irreducibles
$
M=M_1\oplus M_2\oplus \cdots \oplus M_k,
$
if we group the $M_i$ according to their isomorphism classes and sum together the $M_i$ from each isomorphism class, then we obtain the isotypic components of $M$ \cite{Chevalley}. Let
\[
M=V_1\oplus V_2\oplus \cdots \oplus V_m
\]
be the decomposition of $M$ into its isotypic components $V_1, V_2,\ldots, V_m$. This decomposition is more crude, in general, than a decomposition of $M$ into irreducibles, but it has the advantage of being a unique decomposition of $M$ into invariant subspaces under the action of $\C S$. In fact, it is the finest unique decomposition of $M$ into invariant subspaces under the action of $\C S$, in the sense that attempting to decompose any $V_i$ further into invariant subspaces requires a choice of basis. We do not want our definition of spectral analysis to depend on an arbitrary choice such as this, so it is the isotypic decomposition that we will work with. Loosely speaking, the {\em spectral analysis} of $f\in M$ is the examination of the projections of $f$ onto the isotypic components $V_i\subseteq M$. We call these projections the {\em isotypic projections of $f$}. 

In the case of time series, $\C \Z_n$ decomposes into a sum of $n$ one-dimensional isotypic components---with notation as in Example \ref{ExTimeSeries}, let $V_k=\C\textup{-span}(b_k) = \C\textup{-span}(\sum_{t=0}^{n-1}e^{{-2\pi i k t}/{n}}t)\subseteq \C \Z_n$. Then we have the isotypic decomposition $\C \Z_n=\bigoplus_{k=0}^{n-1}V_k$, and spectral analysis of $f\in \C \Z_n$ amounts to an examination of the projections of $f$ onto the $V_k$. In contrast to time series, however, many of the $V_i$ may be multidimensional in general, and to make the notion of spectral analysis precise for a given semigroup $S$ we will need a method to extract information from these projections. How exactly we should do this depends on the particular semigroup under consideration. We will explain this for the symmetric group in Example \ref{ExPersi}, and for the rook monoid in Sections \ref{SecOrthogIsotypic} and \ref{SecRookSpectralGroupoidBasisAssoc}.

The most important $\C S$-module is $M=\C S$ itself (where the action of $\C S$ on $M=\C S$ is given by the multiplication of $\C S$), where the isotypic decomposition of $M$ can be obtained from the Wedderburn isomorphism (\ref{EqWedderburn}). $\C S$ is both a left and right $\C S$-module. Notice that the inverse image of a column (respectively, row) of the $\rho$ block of the algebra on the right in (\ref{EqWedderburn}) is an irreducible left (respectively, right) submodule of $\C S$ of isomorphism class $\rho$. The inverse image of the $\rho$ block in (\ref{EqWedderburn}) is thus the isotypic component of $M$ of isomorphism class $\rho$, and is also a minimal two-sided ideal of $\C S$. Hence the isotypic decomposition of $\C S$ is the same as the (unique) decomposition of $\C S$ into the direct sum of its minimal two-sided ideals. 

Isotypic projections in $\C S$ are easy to compute from Fourier transforms. Let $\rho$ be an irreducible matrix representation of $\C S$ and let $\Y$ be any set of inequivalent, irreducible matrix representations of $\C S$. Let $\gamma\in \Y$ denote the representation in $\Y$ equivalent (if not equal) to $\rho$, and denote the isotypic component of $\C S$ of type $\rho$ by $V_\rho$. To compute the isotypic projection of $f\in \C S$ onto $V_\rho$, take the Fourier transform of $f$ according to $\Y$, set all coefficients of the result equal to 0 except for the ones in the $\gamma$ block, and take the inverse image of that. The result is 
the isotypic projection of $f$ onto $V_\rho$. It is easy to see that this works regardless of the particular matrix representations chosen for $\Y$. Computationally efficient methods for computing Fourier transforms and their inverses on a wide variety of groups and semigroups have been developed. See, for example, \cite{Baum,Clausen,CooleyTukey,RookFFT,InvSemiFFT,Maslen,DanDiameters,DanSepVars,DanWreath}.

\begin{ex}[Symmetric group spectral analysis] 
\label{ExPersi}
This example is an exposition of the ideas of Diaconis \cite{Persi}. 
We explain his ideas from an algebraic standpoint that will be useful for us when we generalize to the rook monoid in Section \ref{SecRookSpectral}. It can be shown that our development here is equivalent (in the sense that it generates the same statistics for any partially ranked voting dataset on any number of candidates) to his. We review only the algebraic aspects that generate the statistics. For full discussion, including a large example and inferential issues, see \cite{Persi}.

First we explain his technique as applied to fully ranked votes. A collection of votes in which every voter ranks each of $n$ candidates in order of preference defines a $\C$-valued (actually, a $\Z$-valued) function on $S_n$, where $f(\sigma)$ is the number of voters casting a ballot of type $\sigma$. Let $f:S_n\rightarrow \C$ and view $f$ as an element of $\C S_n$ as $f=\sum_{\sigma\in S_n}f(\sigma)\sigma$. 
There is a well-known bijection between the irreducible representations of $\C S_n$ and the partitions of $n$ \cite{JamesAndKerber}, so we write
\[
\C S_n = \bigoplus_{\lambda\vdash n}V^{\lambda}
\]
where $V^\lambda$ is the isotypic subspace corresponding to the irreducible representation for the partition $\lambda$. The irreducible representation for $\lambda$ is commonly described in terms of the action of $S_n$ on tableaux of shape $\lambda$---see, e.g., \cite{JamesAndKerber}. 
What we really need for spectral analysis are combinatorial descriptions of the $V^\lambda$ themselves. The descriptions we give below are due to Diaconis \cite{Persi}, and will allow us to describe the natural statistical information each isotypic subspace $V^\lambda$ carries. 

There is a natural inner product on $\C S_n$ given by
\[
\ip{f,g} = \ip{\sum_{\sigma\in S_n}f(\sigma)\sigma,\sum_{\sigma\in S_n}g(\sigma)\sigma} = \sum_{\sigma\in S_n}f(\sigma)\overline{g(\sigma)}.
\]
Under this inner product, the isotypic subspaces of $\C S_n$ are mutually orthogonal \cite[Chapter 2]{Serre}. We now project $f \in \C S_n$ onto each subspace. That is, we write
$
f = \sum_{\lambda \vdash n}f^\lambda,
$ 
for unique elements $f^\lambda \in V^\lambda$. These projections $f^\lambda$ may be computed by running a (fast) Fourier transform on $S_n$, provided $n$ is not too large \cite{Clausen, Maslen}. Other projection formulas are also available. See, for example, \cite[Theorem 1]{Persi}, \cite{DanCharacterProjection}, and \cite[Theorem 8]{Serre}.

Next, we examine the projections $f^\lambda$. This is analogous to examining the component frequencies of a function in the $S=\Zn$ case. However, in our case, many of the $V^\lambda$ are multidimensional, and in addition to concrete descriptions of these spaces we will use an additional device (which Diaconis attributes to C.\ Mallows) to extract information from the projections onto these spaces. 

We also note that, in a similar fashion to how the frequencies of highest amplitude carry the most information about the structure of a continuous waveform, here 
the lengths of the projections are important in determining which projections carry the most information about the structure of a dataset. However, due to the differences in dimensionality between the isotypic subspaces involved here, it is sometimes appropriate in making this determination to weight the lengths of the projections based on the dimensions of the subspaces in which they reside---see \cite{Persi} for more details. By orthogonality of isotypic subspaces, we have
\begin{equation}
\label{eqSumOfSquares}
||f||^2=\ip{f,f} = \sum_{\lambda \vdash n}\ip{f^\lambda,f^\lambda} = \sum_{\lambda \vdash n} ||f^\lambda||^2,
\end{equation}
which allows one to compute and compare easily the lengths of the projections.

First, $V^{(n)}$ is the space of constant functions on the fully ranked votes. It is one-dimensional, and 
\[
f^{(n)} = \left(\sum_{\sigma\in S_n}f(\sigma)\right) \left(\frac{1}{n!}\sum_{\sigma\in S_n}\sigma \right),
\]
where the quantity on the right is the Fourier basis element (for {\em any} complete set of inequivalent, irreducible representations of $\C S_n$) lying in $V^{(n)}$. The projection $f^{(n)}$ therefore records the number of votes cast.

Next, there are $n^2$ {\em easily interpretable} (or just {\em interpretable}) first-order functions. They are of the form
\[
\delta_{i \mapsto j} = \sum_{\sigma\in S_n}\delta_{i \mapsto j}(\sigma)\sigma,
\]
where
\[
\delta_{i \mapsto j} (\sigma) = 
\begin{cases}1 & \textup{if }\sigma(i)=j, \cr
0 & \textup{otherwise,}
\end{cases}
\]
as $i$ and $j$ range over $\{1,2,\ldots,n\}$. $V^{(n-1,1)}$ is an $(n-1)^2$-dimensional space. A general element of $V^{(n-1,1)}$ has the form
\[
\sum_{i,j}a_{i,j}\delta_{i \mapsto j} 
\]
where, since $V^{(n-1,1)}$ is orthogonal to $V^{(n)}$, $\sum_{i,j} a_{i,j} = 0$. $V^{(n-1,1)}$ carries the  ``pure" first-order statistics for fully ranked votes (i.e., the first-order information about the data once the average---the zeroth-order information---has been removed by the projection onto $V^{(n)}$). The device of Mallows used by Diaconis for extracting information from $f^{(n-1,1)}$ is this \cite[Section 2C]{Persi}: for each $i,j\in \{1,2,\ldots,n\},$ examine the inner product of $f^{(n-1,1)}$ with $\delta_{i \mapsto j}$. It turns out that 
\[
\ip{f^{(n-1,1)},\delta_{i\mapsto j}} = \sum_{\sigma \in S_n}f(\sigma)w(\sigma),
\]
where
\[
w(\sigma)=
\begin{cases}
\frac{n-1}{n} & \textup{if }\sigma(i)=j, \\
-\frac{1}{n} & \textup{otherwise.}
\end{cases}
\] 

Next, just as there are easily interpretable first-order functions, there are also easily interpretable second-order (ordered and unordered) functions. The easily interpretable second-order unordered functions are the
\[
\delta_{\{i_1,i_2\} \mapsto \{j_1,j_2\}} = 
\sum_{\sigma\in S_n}\delta_{\{i_1,i_2\} \mapsto \{j_1,j_2\}}(\sigma) \sigma,
\]
where
\[
\delta_{\{i_1,i_2\} \mapsto \{j_1,j_2\}}(\sigma) = 
\begin{cases}
1 &\textup{if } \{\sigma(i_1), \sigma(i_2)\} = \{j_1,j_2\}, \cr
0 & \textup{otherwise}.
\end{cases}
\]
The representation theory of $\C S_n$ implies that every element of $V^{(n-2,2)}$ is a linear combination of the $\delta_{\{i_1,i_2\} \mapsto \{j_1,j_2\}}$ which is orthogonal to the other isotypic subspaces. We denote the easily interpretable second-order ordered functions (defined analogously) by
$
\delta_{i_1 \mapsto j_1 , i_2 \mapsto j_2},
$ 
and elements of $V^{(n-2,1,1)}$ are linear combinations of such which are orthogonal to the other isotypic subspaces. There are also easily interpretable third-order functions and so on. As before, we compute the inner products of $f^{(n-2,2)}$ with the $\delta_{\{i_1,i_2\} \mapsto \{j_1,j_2\}}$ and of $f^{(n-1,1,1)}$ with the $\delta_{i_1 \mapsto j_1 , i_2 \mapsto j_2}$ to obtain second-order statistics of $f$.

In a similar fashion, we can continue as far as we'd like with the remaining isotypic subspaces to extract third-order and higher-order statistics about $f$.

\begin{defn}
The statistics created by projecting a data vector $f\in\C S_n$ onto the isotypic subspaces of $\C S_n$ and computing the inner products of these projections with the easily interpretable functions as described above constitute the {\em symmetric group spectral analysis} of $f$.
\end{defn}


Next we define Diaconis's notion of symmetric group spectral analysis for partially ranked votes. Let $k\leq n$ and suppose we are interested in analyzing the set of votes in an election with $n$ candidates in which every voter ranks their top $k$ candidates in order of preference. The collection of such votes defines a function $f$ on the rank-$k$ elements of $R_n$ of range $\{1,2,\ldots,k\}$, where $f(\sigma)$ is the number of voters who prefer the partial ranking $\sigma$. For each 
element $\sigma$ of $R_n$ of range $\{1,2,\ldots,k\}$, form the following element of $\C S_n$:
\[
\sigma' = \frac{f(\sigma)}{E(\sigma)}\sum_{\substack{t\in S_n: t\geq \sigma}}t,
\]
where $t\geq \sigma$ simply means that $t$ extends $\sigma$ as a partial function, and $E(\sigma)$ is the number of elements $t \in S_n$ that extend $\sigma$. Next, form the following element of $\C S_n$:
\[
F = \sum_{\sigma\in R_n: 
\ran(\sigma)=\{1,2,\ldots,k\}}\sigma'.
\]
Finally, compute the symmetric group spectral analysis of $F$. If a dataset of partial rankings contains data consisting of multiple ranks, then the analysis begins by separating the data according to rank and then proceeds separately, rank-by-rank, generating a different set of statistics for the data of each rank. The main example in \cite{Persi} consists of data of ranks one through five.

\end{ex}

\section{Rook monoid spectral analysis}
\label{SecRookSpectral}

\subsection{Preliminaries}
\label{SecPreliminaries}

As explained in Example \ref{ExPersi}, symmetric group spectral analysis begins by partitioning a dataset of partial rankings by rank before analyzing it---the output of the rank-$k$ spectral analysis for voting data depends only on the rank-$k$ votes. For voting data, it might make sense to partition the dataset by rank before performing spectral analysis if one thinks that voters who vote with different ranks might vote differently. Indeed, this was the case in the main example in \cite{Persi}.

For certain kinds of voting (or other partially ranked) data it might not make sense to partition the data by rank before analyzing it (in which case the full dataset can be averaged to create an element of $\C S_n$ for analysis), or it might make sense to partition the data to an even finer degree before analyzing it---for instance, one might partition the data by rank, domain, and range before performing symmetric group spectral analysis on each part of the partition.

To explain what we mean by symmetric group spectral analysis on such a set of partially ranked data, fix $k\leq n$ and two subsets $D$ and $R$ of $\{1,2,\ldots,n\}$ of size $k$, and let $R_n^{D,R}=\{\sigma\in R_n:\dom(\sigma)=D,\ran(\sigma)=R\}$. Suppose we wish to perform symmetric group spectral analysis on a function $f:R_n^{D,R}\rightarrow \C$. Let $p_D,p_R\in R_n$ be the unique order preserving bijections from $\{1,2,\ldots,k\}$ to $D$ and $R$, respectively. Identify $S_k$ with the elements of $R_n$ whose domain and range are both $\{1,2,\ldots,k\}$, view $f$ as an element of $\C S_k$ by
\[
f=\sum_{\sigma\in R_k^{D,R}} f(\sigma) ({p_R}^{-1} \sigma p_D),
\]
and apply symmetric group spectral analysis (in $\C S_k$) to $f$. As an example, for $n=5$, $k=3$, $D=\{2,4,5\}$, and $R=\{1,2,5\}$, for the choice of $\sigma$ at the beginning of Section \ref{SecPartialRankings} we have
\[
{p_R}^{-1}\sigma p_D = 
\left(\begin{array}{ccccc}
1&2&3&4&5\\
3&1&2&-&-
\end{array}
\right).
\]
Effectively, this is because $\sigma$ sends the first element of its domain to the third element of its range, the second element of its domain to the first element of its range, and the third element of its domain to the second element of its range.

We now proceed by extending the ideas of symmetric group spectral analysis to the rook monoid. In fact, for a function $f:R_n\rightarrow \C$, we define two different approaches to the rook monoid spectral analysis of $f$. (There are two different natural bases of $\C R_n$, and the differences in our approaches arise from the choice of which basis to associate with the delta functions of the elements of $R_n$.) Under the {\em groupoid basis association}, defined in Section \ref{SecGroupoidBasisRook}, we show in Theorem \ref{ThmMainThm} that the rook monoid spectral analysis of $f$ amounts to the idea for analysis above---that is, it amounts to a partitioning of $f$ by rank, domain, and range, before performing symmetric group spectral analysis (using appropriately-sized symmetric groups) on each part of the partition. Under the semigroup basis association, we show in Theorem \ref{ThmSpectralUnderSemigroupBasisAssoc} that rook monoid spectral analysis offers a hierarchical approach to the analysis of $f$---in particular, it is the same as the rook monoid spectral analysis, under the groupoid basis association, of the function $g:R_n\rightarrow \C$ given by
\[
g(\sigma) = \sum_{t\in R_n: t\geq \sigma}f(t),
\]
where $t\geq \sigma$ means that $t$ extends $\sigma$ as a partial function.

To explain precisely how the algebra of $\C R_n$ leads to these methods of analysis we will need the following three things. First, we need an inner product under which the isotypic subspaces of $\C R_n$ are mutually orthogonal. Second, we need easily interpretable functions for partially ranked data. Finally, we need concrete descriptions of the isotypic subspaces of $\C R_n$ in terms of the natural statistical information each carries. We handle these three considerations in Section \ref{SecOrthogIsotypic} with the help of the other natural basis of $\C R_n$---the {\em groupoid basis}---which we now review.

\subsection{The groupoid basis of the rook monoid algebra}
\label{SecGroupoidBasisRook}

The natural partial order on $R_n$ is defined in the following way: for $s,t\in R_n$, say $t\leq s$ if and only if $s$ extends $t$ as a partial function. The {\em groupoid basis} of $\C R_n$ is the collection $\{\ld s\rd\}_{s\in R_n}$, where
\[
\ld s\rd = \sum_{t\in R_n: t\leq s} (-1)^{\rk(s)-\rk(t)} t.
\]
It is well known \cite{Stanley, Steinberg2} that $(-1)^{\rk(s)-\rk(t)}=\mu(t,s)$, where $\mu$ is the M\"obius function of $\leq$, so we can recover the semigroup basis of $\C R_n$ by inverting the M\"obius function:
\[
s=\sum_{t\in R_n: t\leq s}\ld t \rd.
\]
The groupoid basis is a basis for $\C R_n$, with multiplication given by the following formula \cite{Steinberg2}:
\[
\ld s \rd \ld t \rd = \begin{cases}
\ld st \rd & \textup{if } \dom(s)=\ran(t), \\
0 & \textup{otherwise}.
\end{cases}
\]
That is, the product $\ld s \rd \ld t \rd$ is nonzero in $\C R_n$ precisely when the domain of $s$ lines up exactly with the range of $t$.

There is a corresponding notion of a groupoid basis $\{\ld s \rd\}_{s\in S}$ of $\C S$ for any finite inverse semigroup $S$ \cite{Steinberg2}, which we review in 
Appendix \ref{SecGroupoidBasis}. 

\begin{defn} Let $S$ be a finite inverse semigroup and let $f:S\rightarrow \C$. Under the {\em groupoid basis association}, $f$ corresponds to the element $\sum_{s\in S}f(s)\ld s \rd \in \C S$.
\end{defn}

It turns out that the groupoid basis 
of $\C S$ has a number of important implications for the representation theory of $\C S$---for us, it will be instrumental in describing the isotypic subspaces of $\C S$ and it will also yield an inner product under which the isotypic subspaces of $\C S$ are mutually orthogonal. We describe these implications for $\C R_n$ in Section \ref{SecOrthogIsotypic} and for $\C S$ in general in 
Appendix \ref{SecAppendixIsotypics}.
\subsection{Isotypic subspaces, interpretable functions, and an orthogonal inner product}

\label{SecOrthogIsotypic}

In this section we give an inner product under which the isotypic subspaces of $\C R_n$ are mutually orthogonal, we describe easily interpretable functions for partially ranked data, and we give concrete descriptions of the isotypic subspaces of $\C R_n$ in terms of the natural statistical information they carry.

We begin by noting that under the {\em natural inner product} on $\C R_n$ (obtained by declaring the semigroup basis of $\C R_n$ orthonormal), the isotypic subspaces of $\C R_n$ are not mutually orthogonal in general. 
For a simple example, consider $\C R_1=\C\textup{-span} (\textup{Id},N)$ (where $N$ denotes the null map). The irreducible representations of $\C R_1$ are both 1-dimensional. They are given by the linear extension of $\rho^0(x)=1$ for all $x\in R_1$, and by the linear extension of
\[
\rho^1(\textup{Id})=1, \quad \rho^1(N)=0.
\]
$\C R_1$ therefore splits into isotypics as $\C R_1=V^1\oplus V^0$, where $V^0=\C\textup{-span}(N)$ and $V^1=\C\textup{-span}(\textup{Id}-N)$. Under the natural inner product on $\C R_n$, we see that $\ip{N,\textup{Id}-N}=-1$, so $V^0$ and $V^1$ are not mutually orthogonal. This failure is caused by an ``entanglement" between ranks that increases as $n$ increases. Although an inner product under which the isotypic subspaces are mutually orthogonal is not strictly  necessary for spectral analysis, it would give us nice mathematical properties (for instance, if $f,g\in \C S$ and $\bar f, \bar g$ denote the projections of $f$ and $g$ onto some isotypic subspace of $\C S$, then under such an inner product we would have $\ip{f,\bar g} = \ip{\bar f,g} = \ip{\bar f, \bar g}$), and it would aid in a sum-of-squares analysis as in (\ref{eqSumOfSquares}). The groupoid basis effectively undoes the entanglement between ranks that causes this failure---declaring the groupoid basis orthonormal yields an inner product under which the isotypic subspaces of $\C R_n$ are mutually orthogonal.

\begin{thm}
\label{ThmIPOrthogRn}
Let $\ip{\cdot,\cdot}$ be the sesquilinear form on $\C R_n$ induced by, for $s,t\in R_n$,
\[
\ip{\ld s \rd,\ld t \rd} = \begin{cases}
1 & \textup{if } s=t, \\
0 & \textup{otherwise.}\end{cases}
\]
Then, with respect to this inner product, the isotypic subspaces of $\C R_n$ are mutually orthogonal.\end{thm}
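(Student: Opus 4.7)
The strategy is to exploit Steinberg's groupoid basis decomposition of $\C R_n$, which (as recalled in Appendix \ref{SecGroupoidBasis}) gives an algebra isomorphism
\[
\C R_n \;\xrightarrow{\sim}\; \bigoplus_{k=0}^{n} M_{\binom{n}{k}}\!\bigl(\C S_k\bigr).
\]
Under this isomorphism, each groupoid basis element $\ld s \rd$ with $\rk(s)=k$, $\dom(s)=D$, $\ran(s)=R$ is sent to a matrix unit $E_{R,D}$ in the $k$-block, tensored with the permutation $p_R^{-1} s\, p_D \in S_k$ obtained via the order-preserving bijections $p_D, p_R$ from Section \ref{SecPreliminaries}. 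In particular, the entire groupoid basis maps bijectively onto the natural basis $\{E_{R,D}\otimes g\}$ of $\bigoplus_{k} M_{\binom{n}{k}}(\C S_k)$, so the inner product in the theorem transports to the inner product on the right-hand side that makes this natural basis orthonormal.

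The plan then is to refine this decomposition using Wedderburn's theorem on each factor $\C S_k$. Writing $\C S_k = \bigoplus_{\lambda \vdash k} V^\lambda_{S_k}$ as the isotypic decomposition of the group algebra, one obtains
\[
\C R_n \;\cong\; \bigoplus_{k=0}^{n}\bigoplus_{\lambda \vdash k} M_{\binom{n}{k}}\!\bigl(V^\lambda_{S_k}\bigr),
\]
and these summands are precisely the isotypic subspaces of $\C R_n$ (since this is a Wedderburn-type refinement into minimal two-sided ideals). Orthogonality now splits into two cases. For distinct $k \neq k'$, the blocks $M_{\binom{n}{k}}(\C S_k)$ and $M_{\binom{n}{k'}}(\C S_{k'})$ are spanned by disjoint subsets of the orthonormal basis $\{E_{R,D}\otimes g\}$ (namely, those indexed by rank-$k$ versus rank-$k'$ partial rankings), hence are mutually orthogonal. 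For a fixed $k$ and distinct $\lambda \neq \mu$ both partitions of $k$, any two elements $X = \sum_{R,D} E_{R,D}\otimes x_{R,D}$ and $Y = \sum_{R,D} E_{R,D}\otimes y_{R,D}$ of $M_{\binom{n}{k}}(V^\lambda_{S_k})$ and $M_{\binom{n}{k}}(V^\mu_{S_k})$ respectively satisfy
\[
\ip{X,Y} \;=\; \sum_{R,D} \ip{x_{R,D},y_{R,D}}_{\C S_k},
\]
where the inner product on the right is the natural inner product on $\C S_k$. Since $V^\lambda_{S_k}$ and $V^\mu_{S_k}$ are mutually orthogonal under the natural inner product on the group algebra $\C S_k$ (the classical result recalled in Example \ref{ExPersi}), each summand vanishes.

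The main obstacle is simply the bookkeeping to confirm that Steinberg's isomorphism sends the groupoid basis to $\{E_{R,D}\otimes g\}$ in the manner described, so that the inner product transports to the advertised tensor-product form. This hinges on tracking how the idempotents $ss^{-1}$ and $s^{-1}s$ label the matrix-unit rows and columns and how the associated maximal subgroup of each $\mathcal{D}$-class is identified with $S_k$; the content is entirely contained in the appendix on the groupoid basis, so once that machinery is in place the argument above is immediate. In fact, since the argument only uses that $\C S$ decomposes into matrix algebras over group algebras of maximal subgroups with the groupoid basis realizing the natural matrix-unit/group-element basis, the same proof yields the more general statement for arbitrary finite inverse semigroups stated in Appendix \ref{SecAppendixIsotypics}, of which Theorem \ref{ThmIPOrthogRn} is the special case $S = R_n$.
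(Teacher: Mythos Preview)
Your proposal is correct and follows essentially the same approach as the paper: the paper proves the general inverse-semigroup version (Theorem~\ref{OrthogInnerProductS}) by using Steinberg's groupoid-basis decomposition to reduce to the $\D$-class blocks $M_{r_k}(\C G_k)$, handling the case of distinct $\D$-classes by disjointness of groupoid-basis support and the case of the same $\D$-class by reducing to the classical orthogonality of isotypics in $\C G_k$ under its natural inner product. Your write-up is slightly more structural (transporting the inner product through the isomorphism and working on the matrix-algebra side), whereas the paper argues element-by-element with explicit Fourier basis vectors, but the content is the same.
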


Theorem \ref{ThmIPOrthogRn} was proved in \cite{RookFFT}. We extend it to finite inverse semigroups in general in Theorem \ref{OrthogInnerProductS}. 

Next we describe {\em rank-$k$ easily interpretable functions} (or just {\em interpretable functions}) for partially ranked data. Let $k\leq n$. The zeroth-order interpretable functions are the functions $\delta^{D,R}: R_n\rightarrow \C$, defined by
\[
\delta^{D,R}(\sigma) = 
\begin{cases}
1 & \textup{if } \dom(\sigma) = D \textup{ and } \ran(\sigma) = R, \\
0 & \textup{otherwise},
\end{cases}
\]
as $D$ and $R$ range across the size-$k$ subsets of $\{1,2,\ldots,n\}$. If $k\geq 1$ we also have the first-order interpretable functions $\delta^{D,R}_{i\mapsto j}$, defined by
\[
\delta^{D,R}_{i\mapsto j}(\sigma) = 
\begin{cases}
1 & \textup{if }\dom(\sigma)=D,\textup{ } \ran(\sigma)=R, \textup{ and }\sigma(i)=j, \\
0 & \textup{otherwise,} 
\end{cases}
\]
as $D$ and $R$ range over the size-$k$ subsets of $\{1,2,\ldots,n\}$, $i$ ranges over $D$, and $j$ ranges over $R$. If $k\geq 2$ we also have second-order unordered and second-order ordered interpretable functions. The second-order unordered interpretable functions are the $\delta^{D,R}_{\{i_1,i_2\}\mapsto\{j_1,j_2\}}$, defined by
\[
\delta^{D,R}_{\{i_1,i_2\}\mapsto\{j_1,j_2\}}(\sigma)=
\begin{cases}
1 &\textup{if } \dom(\sigma)=D,\textup{ } \ran(\sigma)=R, \textup{ and }\{\sigma(i_1), \sigma(i_2)\} = \{j_1,j_2\}, \cr
0 & \textup{otherwise},
\end{cases}
\]
as $D$ and $R$ range over the size-$k$ subsets of $\{1,2,\ldots,n\}$, $\{i_1, i_2\}$ ranges over the size-2 subsets of $D$, and $\{j_1, j_2\}$ ranges over the size-2 subsets of $R$. The second-order ordered interpretable functions are defined similarly.  If $k\geq 3$ we also have third-order interpretable functions which are defined in an analogous fashion, and so on.

Next we describe the isotypic subspaces of $\C R_n$. The isotypic subspaces of $\C R_n$ are in bijection with the partitions of the integers $\{0,1,\ldots,n\}$ (which can be seen from Theorem \ref{ThmRepGeneration}), so write
\[
\C R_n = \bigoplus_{k=0}^n \bigoplus_{\lambda\vdash k}V^\lambda,
\]
where $V^\lambda$ is the isotypic subspace of $\C R_n$ corresponding to the irreducible representation for the partition $\lambda$. The irreducible representation corresponding to the partition $\lambda$ can be described by combining descriptions of the irreducible representations of the symmetric group with Theorem \ref{ThmRepGeneration}, and the following descriptions of the $V^\lambda$ arise by combining Diaconis's descriptions of the isotypic subspaces of $\C S_n$ in Example \ref{ExPersi} and \cite{Persi} with a technical result in 
Appendix \ref{SecAppendixIsotypics} 
(Theorem \ref{FourierBasisSThm}). Let $k\leq n$. 

$V^{(k)}$ is spanned by the elements
\begin{equation}
\label{EqVkSpan}
\sum_{\sigma\in R_n}\delta^{D,R}(\sigma)\ld \sigma \rd,
\end{equation}
as $D$ and $R$ range over the size-$k$ subsets of $\{1,2,\ldots,n\}$. $V^{(k)}$ therefore carries zeroth-order information for rank-$k$ data. Notice that, for any fixed choice of $D,R$, the element in (\ref{EqVkSpan}) is the function $\delta^{D,R}$ viewed as an element of $\C R_n$ under the groupoid basis association.

For purposes of the following descriptions, let us continue to view the interpretable functions as elements of $\C R_n$ under the groupoid basis association. For example, we have
\[
\delta^{D,R}_{i\mapsto j} = \sum_{\sigma\in R_n} \delta^{D,R}_{i\mapsto j}(\sigma) \ld \sigma \rd.
\]
Every element of $V^{(k-1,1)}$ is of the form
\[
\sum_{\substack{D,R\subseteq \{1,2,\ldots,n\}:\\|D|=|R|=k}}\sum_{i,j} a_{i,j}^{D,R}\delta_{i\mapsto j}^{D,R},
\]
where for every choice of $D$ and $R$ we have $\sum_{i,j}a_{i,j}^{D,R}=0$. $V^{(k-1,1)}$ therefore carries pure first-order information for rank-$k$ data. 

Similarly, every element of $V^{(k-2,2)}$ is a linear combination of the $\delta_{\{i_1,i_2\}\mapsto\{j_1,j_2\}}^{D,R}$ which is orthogonal to the other isotypic subspaces, and so on. $V^{(k-2,2)}$ therefore carries pure second-order unordered information for rank-$k$ data, $V^{(k-2,1,1)}$ carries pure second-order ordered information for rank-$k$ data, and so on.

\subsection{Rook monoid spectral analysis under the groupoid basis association}
\label{SecRookSpectralGroupoidBasisAssoc}

Let $f:R_n\rightarrow \C$. We now define rook monoid spectral analysis of $f$ under the groupoid basis association, i.e., where we view $f\in \C R_n$ by
\[
f=\sum_{\sigma\in R_n}f(\sigma)\ld \sigma \rd.
\]
We use the inner product on $\C R_n$ induced by declaring the groupoid basis mutually orthogonal. As in Section \ref{SecOrthogIsotypic}, let us view easily interpretable functions as elements of $\C R_n$ under the groupoid basis association.

\begin{defn} Let $f:R_n\rightarrow \C$. The statistics created by projecting $\sum_{\sigma\in R_n}f(\sigma)\ld \sigma \rd$ onto the isotypic subspaces of $\C R_n$ and computing the inner products of these projections with the appropriately-paired easily interpretable functions constitute the {\em rook monoid spectral analysis of $f$ under the groupoid basis association}.
\end{defn}

By {\em appropriately-paired}, we mean that the easily interpretable functions $\delta^{D,R}$ with $|D|=|R|=k$ are paired with the projection $f^{(k)}$, the interpretable functions $\delta^{D,R}_{i\mapsto j}$ with $|D|=|R|=k$ are paired with the projection $f^{(k-1,1)}$, the interpretable functions $\delta^{D,R}_{\{i_1,i_2\}\mapsto \{j_1,j_2\}}$ with $|D|=|R|=k$ are paired with the projection $f^{(k-2,2)}$, and so on.

We now describe the statistics that result from this approach in terms of symmetric group spectral analysis. For every pair of size-$k$ subsets $D$ and $R$ of $\{1,2,\ldots,n\}$, denote by $f^{D,R}$ the restriction of $f$ to $\{\sigma\in R_n:\dom(\sigma)=D,\ran(\sigma)=R\}$. We may apply symmetric group spectral analysis (using $\C S_k$) to $f^{D,R}$ in the manner described in Section \ref{SecPreliminaries}.

\begin{thm}
\label{ThmMainThm}
The statistics generated by the rook monoid spectral analysis of $f$, under the groupoid association, are the same as the statistics generated by applying symmetric group spectral analysis using appropriately-sized symmetric groups, in the manner described in Section \ref{SecPreliminaries}, separately to each function in the collection $$\bigcup_{k=0}^{n}\{f^{D,R}: D,R\subseteq \{1,2,\ldots,n\},|D|=|R|=k\}.$$
\end{thm}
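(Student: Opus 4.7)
The plan is to exploit the block/matrix structure of $\C R_n$ relative to the groupoid basis that is described in Appendix \ref{SecGroupoidBasis}, in order to reduce rook monoid spectral analysis entirely to symmetric group spectral analysis on each $(D,R)$-piece.

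First I would invoke the Steinberg-style decomposition
\[
\C R_n \;\cong\; \bigoplus_{k=0}^n M_{\binom{n}{k}}\!\bigl(\C S_k\bigr),
\]
where the matrix indices are size-$k$ subsets of $\{1,2,\ldots,n\}$, and where under this isomorphism the groupoid basis element $\lfloor \sigma \rfloor$ (with $\rk(\sigma)=k$, $\dom(\sigma)=D$, $\ran(\sigma)=R$) is sent to the matrix with the single nonzero entry $p_R^{-1}\sigma p_D \in S_k$ in position $(R,D)$ of the $k$-th block. Feeding $f=\sum_\sigma f(\sigma)\lfloor \sigma \rfloor$ into this isomorphism, the $(R,D)$-entry of the $k$-th block is exactly $\sum_{\sigma\in R_n^{D,R}} f(\sigma)\,p_R^{-1}\sigma p_D$, i.e., precisely $f^{D,R}$ viewed inside $\C S_k$ in the way described in Section \ref{SecPreliminaries}.

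Next, using the description of the isotypics from Appendix \ref{SecAppendixIsotypics}, the $V^\lambda$ for $\lambda \vdash k$ corresponds under the isomorphism above to $M_{\binom{n}{k}}(V^\lambda_{S_k})$, where $V^\lambda_{S_k}$ is the $\lambda$-isotypic of $\C S_k$. Consequently the projection onto $V^\lambda$ acts entrywise on the $k$-th matrix block: the $(R,D)$-entry of $f^\lambda$ is simply the $V^\lambda_{S_k}$-projection of $f^{D,R}$ computed inside $\C S_k$. This is the content of the ``partitioning by rank, domain, and range, followed by symmetric group spectral analysis.''

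To finish, I would verify that the inner products with the easily interpretable functions match. Because an interpretable function such as $\delta^{D,R}_{i\mapsto j}$ is supported on a single $(D,R)$-piece, under the block decomposition it corresponds to a matrix whose only nonzero entry is at position $(R,D)$ and is equal to $\delta_{p_D^{-1}(i)\mapsto p_R^{-1}(j)} \in \C S_k$ (an easily interpretable function for $\C S_k$ in Diaconis's sense). The inner product declaring $\{\lfloor \sigma\rfloor\}$ orthonormal on $\C R_n$ restricts on each $(D,R)$-piece to the natural inner product on $\C S_k$ that declares $S_k$ orthonormal; combined with the entrywise action of the isotypic projection above, this gives
\[
\bigl\langle f^\lambda,\, \delta^{D,R}_{i\mapsto j}\bigr\rangle_{\C R_n}
\;=\;
\bigl\langle (f^{D,R})^\lambda,\, \delta_{p_D^{-1}(i)\mapsto p_R^{-1}(j)}\bigr\rangle_{\C S_k},
\]
and similarly for higher-order interpretable functions and for the rank-$k$ zeroth-order functions $\delta^{D,R}$. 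The same identity for every pair $(D,R)$ and every interpretable function is exactly the statement of the theorem.

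The main obstacle will be setting up the bookkeeping in the first step cleanly: showing that the groupoid-basis block isomorphism carries isotypic subspaces to the matrix algebras over symmetric-group isotypics in the expected way, and that the inner product and the interpretable functions translate correctly under the change of indexing by $p_D$ and $p_R$. All of this ultimately reduces to the structural results for $\C S$ stated in Appendices \ref{SecGroupoidBasis} and \ref{SecAppendixIsotypics}; once those are invoked, the rest is a careful matching of definitions rather than new representation-theoretic work.
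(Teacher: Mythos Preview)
Your proposal is correct and is essentially the same argument as the paper's: the paper also reduces to the $(D,R)$-pieces via the groupoid-basis/matrix-algebra structure, invoking Theorem~\ref{ThmUnderstandingProjections} (which is exactly your ``projections act entrywise on the $k$-th block'') and then matching the interpretable functions and inner product on each piece. The only cosmetic difference is that the paper phrases the reduction by writing $f = F^{D,R} + g$ and arguing that $g$ contributes nothing to the $(D,R)$-inner products, whereas you phrase it globally via the isomorphism $\C R_n \cong \bigoplus_k M_{\binom{n}{k}}(\C S_k)$; these are two packagings of the same computation.
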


By {\em appropriately-sized symmetric groups}, we simply mean that $\C S_k$ is used for $f^{D,R}$ when $k=|D|=|R|$.

\begin{proof}[Proof of Theorem \ref{ThmMainThm}]
Let $k\leq n$ and let $D,R\subseteq \{1,2,\ldots,n\}$ with $|D|=|R|=k$. Let $f=\sum_{\sigma\in R_n}f(\sigma)\ld \sigma\rd$. Let $F^{D,R}:R_n\rightarrow \C$ by
\[
F^{D,R}(\sigma)=\begin{cases}
f(\sigma) & \textup{if }\dom(\sigma)=D\textup{ and }\ran(\sigma)=R,\\
0 & \textup{otherwise}.
\end{cases}
\]
For clarity, the only difference between $F^{D,R}$ and $f^{D,R}$ are their domains.
View $F^{D,R}$ as an element of $\C R_n$ under the groupoid basis association. Theorem \ref{ThmUnderstandingProjections} says that every non-zero isotypic projection of $F^{D,R}$ in $\C R_n$ can be written in terms of groupoid basis elements $\ld \sigma \rd$ for which $\dom(\sigma)=D$ and $\ran(\sigma)=R$, and that the non-zero isotypic projections of $F^{D,R}$ in $\C R_n$ are (after perhaps a relabeling of the domain and range) the same as the non-zero isotypic projections of $f^{D,R}$ in $\C S_k$. 
The easily interpretable functions of domain $D$ and range $R$ in $\C R_n$ were defined in such a way that their inner products with the isotypic projections of $F^{D,R}$ in $\C R_n$ are the same as the inner products of the isotypic projections of $f^{D,R}$ in $\C S_k$ with the easily interpretable functions in $\C S_k$. Furthermore, it is immediate that the inner products of the isotypic projections of $F^{D,R}$ in $\C R_n$ with the other interpretable functions in $\C R_n$ are zero.

Next, if $g=\sum_{\sigma\in R_n}g(\sigma) \ld \sigma \rd \in \C R_n$ and $g(\sigma)=0$ whenever $\dom(\sigma)=D$ and $\ran(\sigma)=R$, Theorem \ref{ThmUnderstandingProjections} says that the isotypic projections of $g$ in $\C R_n$, when written in terms of the groupoid basis, have nonzero coefficients only for groupoid basis elements $\ld \sigma \rd$ for which $\dom(\sigma)\neq D$ or $\dom(\sigma)\neq R$. Therefore, for any such element $g\in \C R_n$, the statistics generated by the inner products of the isotypic projections of $F^{D,R}$ with the easily  interpretable functions of domain $D$ and range $R$ in $\C R_n$ are the same as the statistics generated by the inner products of the isotypic projections of $F^{D,R}+g$ with the easily  interpretable functions of domain $D$ and range $R$ in $\C R_n$. In particular, for some such element $g\in \C R_n$ we have $f=F^{D,R}+g$, so the statistics that arise from the inner products of the isotypic projections of $f\in \C R_n$ with the interpretable functions of domain $D$ and range $R$ in $\C R_n$ are the same as the statistics that arise from the symmetric group spectral analysis (using $\C S_k$) of $f^{D,R}$.
\end{proof}

In an analogous fashion, Theorems \ref{FourierBasisSThm} and \ref{ThmUnderstandingProjections} show that if $S$ is any finite inverse semigroup, $f:S\rightarrow \C$, and we view $f\in \C S$ using the groupoid basis association, then spectral analysis of $f$ boils down to the spectral analysis of the components of $f$ using the group algebras of the maximal subgroups of $S$.


Theorem \ref{ThmMainThm} shows that rook monoid spectral analysis under the groupoid basis association is different from symmetric group spectral analysis, in that it gives a more granular picture of the partially ranked data in a dataset. Next we give a direct comparison between the two approaches in the context of an example.

\subsection{An example}
\label{SecRookExample}

We now apply rook monoid spectral analysis under the groupoid basis association to a particular collection of partially ranked voting data. Our dataset for this example is the well-studied collection of votes from the 1980 American Psychological Association (APA) election, in which voters were asked to rank five candidates in order of preference. 
{\mbox{15449}} people voted, of which 5738 fully ranked all five candidates. The rank-3 votes are tallied in Table \ref{rank3} \cite[Table 6]{Persi} and the votes of other ranks can be found in \cite[Tables 1 and 6]{Persi}. Each vote is a partial ranking.

\begin{table}[ht]
\btablesize
\caption[APA election: Rank-3 ballots]{Rank-3 ballots}
\begin{center}
\begin{tabular}{|cc|cc|cc|cc|}
\hline
\label{rank3}
Vote & Tally & Vote & Tally & Vote & Tally & Vote & Tally\\
\hline
$[1,2,3,-,-]$&27&$[3,1,-,-,2]$&38&$[1,-,-,2,3]$&44&$[-,3,1,-,2]$&16\\
$[1,3,2,-,-]$&79&$[2,3,-,-,1]$&35&$[1,-,-,3,2]$&35&$[-,2,3,-,1]$&14\\
$[2,1,3,-,-]$&31&$[3,2,-,-,1]$&41&$[2,-,-,1,3]$&46&$[-,3,2,-,1]$&15\\
$[3,1,2,-,-]$&32&$[1,-,2,3,-]$&30&$[2,-,-,3,1]$&62&$[-,1,-,2,3]$&55\\
$[2,3,1,-,-]$&83&$[1,-,3,2,-]$&21&$[3,-,-,1,2]$&90&$[-,1,-,3,2]$&45\\
$[3,2,1,-,-]$&57&$[2,-,1,3,-]$&39&$[3,-,-,2,1]$&75&$[-,2,-,1,3]$&54\\
$[1,2,-,3,-]$&19&$[3,-,1,2,-]$&15&$[-,1,2,3,-]$&9&$[-,3,-,1,2]$&62\\
$[1,3,-,2,-]$&22&$[2,-,3,1,-]$&15&$[-,1,3,2,-]$&17&$[-,2,-,3,1]$&50\\
$[2,1,-,3,-]$&31&$[3,-,2,1,-]$&13&$[-,3,1,2,-]$&26&$[-,3,-,2,1]$&59\\
$[3,1,-,2,-]$&45&$[1,-,3,-,2]$&41&$[-,2,1,3,-]$&17&$[-,-,1,2,3]$&15\\
$[2,3,-,1,-]$&28&$[1,-,2,-,3]$&49&$[-,2,3,1,-]$&21&$[-,-,1,3,2]$&19\\
$[3,2,-,1,-]$&51&$[2,-,1,-,3]$&74&$[-,3,2,1,-]$&18&$[-,-,2,1,3]$&16\\
$[1,2,-,-,3]$&26&$[3,-,1,-,2]$&47&$[-,1,2,-,3]$&8&$[-,-,3,1,2]$&46\\
$[1,3,-,-,2]$&31&$[2,-,3,-,1]$&37&$[-,1,3,-,2]$&15&$[-,-,2,3,1]$&17\\
$[2,1,-,-,3]$&17&$[3,-,2,-,1]$&32&$[-,2,1,-,3]$&16&$[-,-,3,2,1]$&20\\
\hline
\end{tabular}
\end{center}
\etablesize
\end{table}

This dataset defines a $\C$-valued (actually, a $\Z$-valued) function $f$ on $R_5$, where $f(\sigma)$ is the number of voters casting a ballot of type $\sigma$. The $\sigma$ here are written in standard list-form, with the top row removed. For example, looking at \cite[Table 1]{Persi} we have
\[
f\left(  
\begin{array}{ccccc}1&2&3&4&5 \cr 2&3&1&4&5\end{array}
 \right)
= 172,
\]
and from Table \ref{rank3} \cite[Table 6]{Persi} we see that
\[
f\left( 
\begin{array}{ccccc}1&2&3&4&5 \cr 3&-&-&2&1\end{array}
 \right)
= 75.
\]
We have the isotypic decomposition
\begin{align*}
\C R_5 =& \left(V^{(5)} \oplus V^{(4,1)} \oplus V^{(3,2)} \oplus V^{(3,1,1)} \oplus V^{(2,2,1)} \oplus V^{(2,1,1,1)} \oplus V^{(1,1,1,1,1)} \right)\\& 
\oplus \left(V^{(4)} \oplus V^{(3,1)} \oplus V^{(2,2)} \oplus V^{(2,1,1)} \oplus V^{(1,1,1,1)} \right) \\&
\oplus \left(V^{(3)} \oplus V^{(2,1)} \oplus V^{(1,1,1)} \right)\oplus 
\left(V^{(2)} \oplus V^{(1,1)} \right)\oplus 
\left(V^{(1)} \right)\oplus 
\left(V^{(0)}\right),
\end{align*}
where $V^\lambda$ is the isotypic subspace of $\C R_5$ corresponding to the irreducible representation for $\lambda$. View $f$ as an element of $\C R_5$ under the groupoid basis association.

We begin our analysis by projecting $f$ onto the isotypic subspaces, that is, by writing 
\[
f = \sum_{k=0}^5 \sum_{\lambda \vdash k}f^\lambda
\]
for unique elements $f^\lambda \in V^\lambda$. 
We use the inner product induced by declaring the groupoid basis elements of $\C R_n$ mutually orthonormal, so that the $V^\lambda$ are mutually orthogonal.

Under the groupoid basis association, the rank-$k$ data projects onto the $V^\lambda$ where $\lambda \vdash k$, and we may therefore carry out our analysis rank by rank. According to Theorem \ref{ThmMainThm}, our rank-5 analysis is exactly the same as that provided by symmetric group spectral analysis applied to the rank-5 votes. The results from that analysis may be found in \cite{Persi}. It is in the partially ranked data that rook monoid spectral analysis differs.

The projections $f^\lambda$ for $\lambda \vdash 4$ are all zero, as $f(\sigma)=0$ for all $\sigma\in R_5$ such that $\rk(\sigma)=4$. After all, ranking $n-1$ out of $n$ candidates naturally ranks the $n$th as well. 


Next we consider the projections $f^\lambda$ for $\lambda \vdash 3$. Recall from Section \ref{SecOrthogIsotypic} that $V^{(3)}$ is the sum of the spaces of constant functions for each of the rank-3 choices of domain and range. That is, $V^{(3)}$ is spanned by the elements
\[
\delta^{D,R} = \sum_{\sigma\in R_5}\delta^{D,R}(\sigma)\ld \sigma \rd, 
\]
as $D$ and $R$ range across all size-$3$ subsets of $\{1,2,3,4,5\}$. We take the projection $f^{(3)}$ and compute the inner products of it with these $\delta^{D,R}$ to obtain Table \ref{0thOrderRank3}. Notice that the $D,R$ entry is simply the number of rank-3 voters ranking the candidates in $D$ in the positions in $R$.

\begin{table}
\btablesize
\caption{Zeroth-order groupoid analysis, rank-3 data}
\begin{center}
\begin{tabular}{|c|ccccc|}
\hline
\label{0thOrderRank3}
& \multicolumn{5}{|c|}{{\bf Range}}\\
\hline
Domain&{\bf1,2,3}&{\bf1,2,4}&{\bf1,2,5}&$\cdots$&{\bf3,4,5}\\
\hline
1,2,3&309&0&0&$\cdots$&0 \\
1,2,4&196&0&0&$\cdots$&0 \\
1,2,5&188&0&0&$\cdots$&0 \\
1,3,4&133&0&0&$\cdots$&0 \\
1,3,5&280&0&0&$\cdots$&0 \\
1,4,5&352&0&0&$\cdots$&0 \\
2,3,4&108&0&0&$\cdots$&0 \\
2,3,5&84 &0&0&$\cdots$&0 \\
2,4,5&325&0&0&$\cdots$&0 \\
3,4,5&133&0&0&$\cdots$&0 \\
\hline
\end{tabular}
\end{center}
\etablesize
\end{table}

More interesting is $f^{(2,1)}$, which in this case contains both the pure first-order and second-order unordered information. To explain, we have the easily interpretable first-order rank-3 functions 
\[
\delta^{D,R}_{i\mapsto j} = \sum_{\sigma\in R_5}\delta^{D,R}_{i\mapsto j}(\sigma) \ld \sigma \rd,
\]
where $D, R$ are size-$3$ subsets of $\{1,2,3,4,5\}$, $i \in D$, and $j \in R$.
Every element of $V^{(2,1)}$ is of the form
\[
\sum_{D,R}\sum_{i,j}a^{D,R}_{i,j}\delta^{D,R}_{i\mapsto j},
\]
where, for every choice of $D,R$,
$
\sum_{i,j}a^{D,R}_{i,j} = 0.
$
When ranking three candidates, choosing a domain, a range, and the ranking of one of the candidates automatically defines the unordered set of rankings for the other two candidates. Thus, for the analogous second-order unordered rank-3 functions, we have (for $\{i_1,i_2\} \subset D, \{j_i,j_2\} \subset R$),
\[
\delta^{D,R}_{\{i_1,i_2\} \mapsto \{j_1,j_2\}} = \delta^{D,R}_{D \setminus \{i_1,i_2\} \mapsto R \setminus \{j_1,j_2\}}.
\]
$V^{(2,1)}$ therefore carries pure second-order unordered statistics as well.
%


Inner products of $f^{(2,1)}$ with the $\delta^{D,R}_{i\mapsto j}$ are given in Table \ref{1stOrderRank3Table1}. 
Entries in these tables have been rounded to two decimal places. By the comment above, the inner products of $f^{(2,1)}$ with the $\delta_{\{i_1,i_2\}\mapsto\{j_1,j_2\}}^{D,R}$ are just permutations of the entries in Table \ref{1stOrderRank3Table1}. For example, the inner products of $f^{(2,1)}$ with $\delta_{\{i_1,i_2\}\mapsto\{j_1,j_2\}}^{\{1,4,5\},\{1,2,3\}}$ are given in Table \ref{ExSecondOrderUnorderedRank3}. 

If we denote the rank-$3$ portion of $f$ by $f_3$,
\[
f_3 = \sum_{\sigma\in R_n: \rk(\sigma)=3}f(\sigma)\ld \sigma \rd,
\]
then we have
$
||f^{(3)} + f^{(2,1)} || > .996 ||f_3||,
$
so we discard the projection $f^{(1,1,1)}$ from our analysis.

For comparison, the results of symmetric group spectral analysis, as applied to the rank-3 portion of $f$, are given in Table \ref{DiaconisSecondOrderUnorderedRank3} \cite[Table 9]{Persi}.

%


\begin{table}
\btablesize
\caption{First-order groupoid analysis, rank-3 data}
\label{1stOrderRank3Table1}
\begin{minipage}[c]{0.55\linewidth}\centering
\begin{tabular}{|c|ccccc|}
\hline
\multicolumn{6}{|c|}{$D = \{1,2,3\}, R = \{1,2,3\}$}\\
\hline
& \multicolumn{5}{|c|}{{\bf Rank}}\\
\hline
Candidate & {\bf 1} & {\bf 2} & {\bf 3} &\bf{4} &\bf{5}\\
\hline
1 & 3 & 11 & -14 &0&0\\
2 & -40 & -19 & 59 &0&0\\
3 & 37 & 8 & -45 &0&0\\
4 & 0 & 0 & 0 &0&0\\
5 & 0 & 0 & 0 &0&0\\
\hline
\hline
\multicolumn{6}{|c|}{$D = \{1,2,5\}, R = \{1,2,3\}$}\\
\hline
& \multicolumn{5}{|c|}{{\bf Rank}}\\
\hline
Candidate & {\bf 1} & {\bf 2} & {\bf 3} &\bf{4} &\bf{5} \\
\hline
1&-5.67&-10.67&16.33&0&0\\
2&-7.67&4.33&3.33&0&0\\
3&0&0&0&0&0\\
4&0&0&0&0&0\\
5&13.33&6.33&-19.67&0&0\\
\hline
\hline
\multicolumn{6}{|c|}{$D = \{1,3,5\}, R = \{1,2,3\}$}\\
\hline
& \multicolumn{5}{|c|}{{\bf Rank}}\\
\hline
Candidate & {\bf 1} & {\bf 2} & {\bf 3} &\bf{4} &\bf{5} \\
\hline
1&-3.33&17.67&-14.33&0&0\\
2&0&0&0&0&0\\
3&27.67&-12.33&-15.33&0&0\\
4&0&0&0&0&0\\
5&-24.33&-5.33&29.67&0&0\\
\hline
\hline
\multicolumn{6}{|c|}{$D = \{2,3,4\}, R = \{1,2,3\}$}\\
\hline
& \multicolumn{5}{|c|}{{\bf Rank}}\\
\hline
Candidate & {\bf 1} & {\bf 2} & {\bf 3} &\bf{4} &\bf{5} \\
\hline
1&0&0&0&0&0\\
2&-10&2&8&0&0\\
3&7&-9&2&0&0\\
4&3&7&-10&0&0\\
5&0&0&0&0&0\\
\hline
\hline
\multicolumn{6}{|c|}{$D = \{2,4,5\}, R = \{1,2,3\}$}\\
\hline
& \multicolumn{5}{|c|}{{\bf Rank}}\\
\hline
Candidate & {\bf 1} & {\bf 2} & {\bf 3} &\bf{4} &\bf{5} \\
\hline
1&0&0&0&0&0\\
2&-8.33&-4.33&12.67&0&0\\
3&0&0&0&0&0\\
4&7.67&5.67&-13.33&0&0\\
5&0.67&-1.33&0.67&0&0\\
\hline
\end{tabular}
\end{minipage}
\begin{minipage}[c]{0.42\linewidth}\centering
\begin{tabular}{|c|ccccc|}
\hline
\multicolumn{6}{|c|}{$D = \{1,2,4\}, R = \{1,2,3\}$}\\
\hline
& \multicolumn{5}{|c|}{{\bf Rank}}\\
\hline
Candidate & {\bf 1} & {\bf 2} & {\bf 3}&\bf{4} &\bf{5}  \\
\hline
1&-24.33&-6.33&30.67&0&0\\
2&10.67&4.67&-15.33&0&0\\
3&0&0&0&0&0\\
4&13.67&1.67&-15.33&0&0\\
5&0&0&0&0&0\\
\hline
\hline
\multicolumn{6}{|c|}{$D = \{1,3,4\}, R = \{1,2,3\}$}\\
\hline
& \multicolumn{5}{|c|}{{\bf Rank}}\\
\hline
Candidate & {\bf 1} & {\bf 2} & {\bf 3} &\bf{4} &\bf{5} \\
\hline
1&6.67&9.67&-16.33&0&0\\
2&0&0&0&0&0\\
3&9.67&-1.33&-8.33&0&0\\
4&-16.33&-8.33&24.67&0&0\\
5&0&0&0&0&0\\
\hline
\hline
\multicolumn{6}{|c|}{$D = \{1,4,5\}, R = \{1,2,3\}$}\\
\hline
 & \multicolumn{5}{|c|}{{\bf Rank}}\\
\hline
Candidate & {\bf 1} & {\bf 2} & {\bf 3} &\bf{4} &\bf{5} \\
\hline
1&-38.33&-9.33&47.67&0&0\\
2&0&0&0&0&0\\
3&0&0&0&0&0\\
4&18.67&1.67&-20.33&0&0\\
5&19.67&7.67&-27.33&0&0\\
\hline
\hline
\multicolumn{6}{|c|}{$D = \{2,3,5\}, R = \{1,2,3\}$}\\
\hline
& \multicolumn{5}{|c|}{{\bf Rank}}\\
\hline
Candidate & {\bf 1} & {\bf 2} & {\bf 3} &\bf{4} &\bf{5} \\
\hline
1&0&0&0&0&0\\
2&-5&2&3&0&0\\
3&4&-5&1&0&0\\
4&0&0&0&0&0\\
5&1&3&-4&0&0\\
\hline
\hline
\multicolumn{6}{|c|}{$D = \{3,4,5\}, R = \{1,2,3\}$}\\
\hline
& \multicolumn{5}{|c|}{{\bf Rank}}\\
\hline
Candidate & {\bf 1} & {\bf 2} & {\bf 3} &\bf{4} &\bf{5} \\
\hline
1&0&0&0&0&0\\
2&0&0&0&0&0\\
3&-10.33&-11.33&21.67&0&0\\
4&17.67&-9.33&-8.33&0&0\\
5&-7.33&20.67&-13.33&0&0\\
\hline
\end{tabular}
\end{minipage}
\etablesize
\end{table}

\begin{table}[htb]
\btablesize
\caption{Second-order unordered groupoid analysis, votes with domain $\{1,4,5\}$ and range $\{1,2,3\}$}
\begin{center}
\label{ExSecondOrderUnorderedRank3}
\begin{tabular}{|c|ccc|}
\hline
& \multicolumn{3}{|c|}{{\bf Rank}}\\
\hline
Candidates & {\bf 1,2} & {\bf 1,3} & {\bf 2,3} \\
\hline
$1, 4$ & -27.33 & 7.67 & 19.67 \\
$1, 5$ & -20.33 & 1.67 & 18.67 \\
$4, 5$ & 47.67 & -9.33 & -38.33\\
\hline
\end{tabular}
\end{center}
\etablesize
\end{table}

\begin{table}[ht]
\btablesize
\caption{Diaconis's first-order and second-order unordered analysis, rank-3 data}
\label{DiaconisFirstOrderRank3}
\label{DiaconisSecondOrderUnorderedRank3}
\begin{minipage}[c]{0.45\linewidth}\centering
\begin{tabular}{|c|ccc|}
\hline
& \multicolumn{3}{|c|}{{\bf Rank}}\\
\hline
Candidate & {\bf 1} & {\bf 2} & {\bf 3} \\
\hline
1&2&76&114\\
2&-78&-28&52\\
3&2&-103&-116\\
4&38&-7&-48\\
5&35&63&-1\\
\hline
\end{tabular}
\end{minipage}
\hspace{0in}
\begin{minipage}[c]{0.45\linewidth}\centering
\begin{tabular}{|c|ccc|}
\hline
& \multicolumn{3}{|c|}{{\bf Rank}}\\
\hline
Candidates & {\bf 1,2} & {\bf 1,3} & {\bf 2,3} \\
\hline
1,2 & -50 & 6 & 12 \\
1,3 & 150 & -3 & -41 \\
1,4 & -71 & -8 & 11 \\
1,5 & -28 & 5 & 16 \\
2,3 & -2 & 24 & 28 \\
2,4 & 57 & -5 & -7 \\
2,5 & -5 & -24 & -34 \\
3,4 & -84 & -12 & -4\\
3,5 & -63 & -8 & 17\\
4,5 & 97 & 26 & 0\\
\hline
\end{tabular}
\end{minipage}
\etablesize
\end{table}

Through the examination of Tables \ref{1stOrderRank3Table1} through \ref{DiaconisFirstOrderRank3}, we see that rook monoid spectral analysis under the groupoid basis association offers a more local, granular inspection of the data than the symmetric group spectral analysis of Example \ref{ExPersi} does, in that it allows us to see how the natural subsets of the rank-$k$ voters vote amongst themselves. 

Positive numbers in these tables indicate a positive (larger-than-average) effect for choosing a candidate (or group of candidates) in a position (or group of positions), with higher values indicating stronger effects. A constant function would have all entries in these tables equal to 0 (as $V^{(3)}$ is orthogonal to $V^{(2,1)}$).
Negative values in these tables indicate lower-than-average effects, with larger-magnitude negative numbers indicating stronger negative effects. For example, the entry of $-38.33$ in the $((4,5),(2,3))$ position of Table \ref{ExSecondOrderUnorderedRank3} indicates that voters who chose to rank candidates 1, 4, and 5 (in positions 1, 2, and 3) ranked candidates 4 and 5 in positions 2 and 3 (without regard to order) considerably less often than would occur in a uniform spread of votes. A quick glance at the votes in Table \ref{rank3} reveals that the partial rankings $[1,-,-,2,3]$ and $[1,-,-,3,2]$ were indeed the two least-popular choices among the rank-3 voters ranking candidates 1, 4, and 5.

Before we proceed with an examination of the numbers in Table \ref{1stOrderRank3Table1}, we recall the main pattern that Diaconis found in the overall dataset \cite{Persi}. In the rank-5 data, Diaconis found large second-order unordered pair effects for ranking candidates 1 and 3 and candidates 4 and 5 in positions 1 and 2 and positions 4 and 5. He found similar patterns in the lower-order ranks in the dataset. The short story of this election was that candidates 1 and 3 were on one side, candidates 4 and 5 on the other, and candidate 2 was somewhere in the middle, a bit closer to candidates 4 and 5. Voters primarily tended to support one of these sets of candidates, either $\{1,3\}$ or $\{4,5\}$, and then chose between them. This is supported by the second-order data in Table \ref{DiaconisSecondOrderUnorderedRank3}. The first-order information in Table \ref{DiaconisFirstOrderRank3} also shows that among all rank-3 voters, candidates 4 and 5 were preferred overall. Note that the information in Table \ref{DiaconisFirstOrderRank3} accounts for the fact that in this election, for rank-3 voters, unranked candidates are implicitly ranked in one of the last two positions---this is automatically accounted for in the creation of this table because Diaconis's technique for symmetric group spectral analysis begins by averaging over missing data for partially ranked data.

Table \ref{1stOrderRank3Table1} allows us to proceed with a more local, granular examination of the rank-3 data. Examining these numbers  we see that, among the rank-3 voters, there is a consistent positive effect for ranking the candidate pairs (1, 3) and (4, 5) in positions 1 and 2 (without regard to order). We also see a positive first-order effect---and often a strong one---for ranking candidate 3 in position 1 {\em whenever candidate 3 is ranked, except when the other two candidates ranked are candidates 4 and 5}. These observations are consistent with the overall patterns in the dataset and further support the idea that candidates 1 and 3 were on one side of the election and candidates 4 and 5 were on the other side. We can also examine subsets of these tables together to glean further insights---for example, the numbers in Table \ref{1stOrderRank3Table1} show that among the rank-3 voters ranking both candidates 1 and 3, candidate 3 was heavily preferred. This information is not readily apparent from Table \ref{DiaconisFirstOrderRank3}. 


An examination of the projections $f^\lambda$ for $\lambda=2,1,0$ would proceed in a similar fashion, and is omitted.

\subsection{Rook monoid spectral analysis under the semigroup basis association}
\label{SecRookSpectralSemigroupBasisAssoc}

Let $f:R_n\rightarrow \C$. We now define rook monoid spectral analysis under the semigroup basis association, i.e., where we view $f$ as an element of $\C R_n$ by
\[
f = \sum_{\sigma\in R_n} f(\sigma)\sigma.
\]
Note that $f$, when expressed with respect to the groupoid basis, is
\[
\sum_{\sigma\in R_n} g(\sigma) \ld \sigma \rd,
\]
where
\[
g(\sigma) = \sum_{t \geq \sigma}f(t).
\]

We continue to use the inner product on $\C R_n$ induced by declaring the groupoid basis mutually orthonormal. We will remark on the alternative inner product (induced by declaring the semigroup basis mutually orthonormal) in Section \ref{SecNaturalInnerProd}.

As we did 
in Section \ref{SecRookSpectralGroupoidBasisAssoc}, to analyze $f$ we project $f$ onto the isotypic subspaces of $\C R_n$ and compute inner products with the appropriate easily interpretable functions. In general, let $E$ be an easily interpretable function for rank-$k$ data. Then $E:R_n\rightarrow \{0,1\}$ and $E(\sigma)=0$ if $\rk(\sigma)\neq k$. Let $E_G$ and $E_S$ denote $E$ viewed as an element of $\C R_n$ under the groupoid basis association and the semigroup basis association, respectively. We claim that we get the same numbers regardless of whether we compute inner products of the projections of $f$ with $E_G$ or $E_S$. To see this, let $\lambda\vdash k$. We have
\[
E_G = \sum_{\sigma\in R_n:\rk(\sigma)=k}E(\sigma)\ld \sigma \rd, \quad E_S = \sum_{\sigma\in R_n: \rk(\sigma)=k}E(\sigma) \sigma.
\]
Now, $E_S$, when expressed with respect to the $\ld \sigma \rd$ basis, is of the form
\[
\sum_{\sigma\in R_n:\rk(\sigma)=k}E(\sigma)\ld \sigma \rd + \sum_{\sigma\in R_n:\rk(\sigma) < k}z(\sigma)\ld\sigma\rd
\]
for some function $z$ on $R_n$. Also, the projection $f^\lambda$, when expressed in terms of the groupoid basis, contains nonzero coefficients only for elements $\ld \sigma \rd$ for which $\rk(\sigma)=k$. Therefore, $\ip{f^\lambda,E_G} = \ip{f^\lambda,E_S}$.

\begin{defn}Let $f:R_n\rightarrow \C$. The statistics created by projecting $\sum_{\sigma\in R_n}f(\sigma) \sigma$ onto the isotypic subspaces of $\C R_n$ and computing the inner products of these projections with the appropriately-paired easily interpretable functions constitute the {\em rook monoid spectral analysis of $f$ under the semigroup basis association}.
\end{defn}

By the discussion above, we have:

\begin{thm} 
\label{ThmSpectralUnderSemigroupBasisAssoc}
Let $f:R_n\rightarrow \C$. Then the statistics generated by the rook monoid spectral analysis of $f$ under the semigroup basis association are the same as the statistics generated by the rook monoid spectral analysis, under the groupoid basis association, of the function $g:R_n\rightarrow \C$, where
\[
g(\sigma) = \sum_{t\geq \sigma} f(t).
\]
\end{thm}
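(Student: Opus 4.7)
The plan is to reduce the statement to two observations: first, that expressing $f = \sum_\sigma f(\sigma)\sigma$ in the groupoid basis produces exactly $\sum_\tau g(\tau)\lfloor \tau \rfloor$; and second, that for any easily interpretable function $E$, the inner products $\langle f^\lambda, E_G\rangle$ and $\langle f^\lambda, E_S\rangle$ agree. Once both are established, the two analyses produce the same element of $\C R_n$ to project and the same numerical statistics, so they are literally the same analysis.

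For the first observation, I would apply the M\"obius inversion identity $s = \sum_{t \leq s}\lfloor t \rfloor$ recalled in Section \ref{SecGroupoidBasisRook}. Substituting this into $f = \sum_{\sigma\in R_n}f(\sigma)\sigma$ and interchanging the order of summation yields
\[
f \;=\; \sum_{\sigma\in R_n} f(\sigma) \sum_{\tau\leq \sigma}\lfloor\tau\rfloor \;=\; \sum_{\tau\in R_n}\Bigl(\sum_{\sigma\geq \tau}f(\sigma)\Bigr)\lfloor\tau\rfloor \;=\; \sum_{\tau\in R_n}g(\tau)\lfloor\tau\rfloor,
\]
so that the element of $\C R_n$ obtained from $f$ under the semigroup basis association coincides with the element obtained from $g$ under the groupoid basis association. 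In particular, their isotypic projections $f^\lambda$ are identical elements of $\C R_n$ in the two analyses.

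For the second observation, fix $\lambda \vdash k$ and let $E$ be a rank-$k$ easily interpretable function, so $E(\sigma)=0$ unless $\rk(\sigma)=k$. Writing $E_S = \sum_{\rk(\sigma)=k}E(\sigma)\sigma$ and expanding each $\sigma$ of rank $k$ via $\sigma = \sum_{\tau \leq \sigma}\lfloor \tau \rfloor = \lfloor\sigma\rfloor + \sum_{\tau < \sigma}\lfloor\tau\rfloor$ shows that
\[
E_S \;=\; E_G \;+\; \sum_{\rk(\tau)<k} z(\tau)\lfloor\tau\rfloor
\]
for some function $z$ on the lower-rank elements of $R_n$. I then invoke the description of the isotypic subspaces from Section \ref{SecOrthogIsotypic} (which rests on Theorem \ref{FourierBasisSThm} / Theorem \ref{ThmUnderstandingProjections}) to note that $f^\lambda \in V^\lambda$ is a linear combination of groupoid basis elements $\lfloor\sigma\rfloor$ with $\rk(\sigma)=k$. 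Since the groupoid basis is orthonormal for the inner product used here, the inner product of $f^\lambda$ with any groupoid basis element of rank less than $k$ vanishes, so $\langle f^\lambda, E_S\rangle = \langle f^\lambda, E_G\rangle$.

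Combining the two observations, the semigroup-association analysis of $f$ computes inner products of the projections $g^\lambda$ (which equal the corresponding $f^\lambda$ as elements of $\C R_n$) against $E_S$, and these equal the inner products of $g^\lambda$ against $E_G$, which by definition are the statistics produced by the groupoid-association analysis of $g$. I expect the only delicate step to be the appeal to Theorem \ref{ThmUnderstandingProjections} to certify that isotypic projections onto $V^\lambda$ for $\lambda \vdash k$ are supported, in the groupoid basis, on rank-$k$ elements; the rest is formal manipulation with M\"obius inversion and orthonormality of the groupoid basis.
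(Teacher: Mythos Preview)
Your proposal is correct and follows essentially the same approach as the paper: the paper also reduces the theorem to the two observations that (i) expanding $\sum_\sigma f(\sigma)\sigma$ in the groupoid basis gives $\sum_\tau g(\tau)\lfloor\tau\rfloor$, and (ii) $\langle f^\lambda,E_S\rangle=\langle f^\lambda,E_G\rangle$ because $E_S-E_G$ is supported on groupoid basis elements of rank $<k$ while $f^\lambda$ is supported on rank-$k$ elements. Your identification of the one nontrivial ingredient---that $V^\lambda$ for $\lambda\vdash k$ lies in the span of $\{\lfloor\sigma\rfloor:\rk(\sigma)=k\}$, via Theorem~\ref{FourierBasisSThm}---matches the paper's reasoning exactly.
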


Theorem \ref{ThmSpectralUnderSemigroupBasisAssoc}
shows that rook monoid spectral analysis, under the semigroup basis association, offers a hierarchical approach to the spectral analysis of partially ranked data, where the rank-$k$ analysis is derived not just from the data of rank $k$, but instead from the data of rank $k$ and higher.

\subsection{Rook monoid spectral analysis under the natural inner product}
\label{SecNaturalInnerProd}

Let $f:R_n\rightarrow \C$. In this section we give a couple of remarks about what happens if we try to apply rook monoid spectral analysis to $f$ under the {\em natural inner product} on $\C R_n$, induced by declaring the semigroup basis of $\C R_n$ mutually orthonormal. As we saw in Section \ref{SecOrthogIsotypic}, under this inner product the isotypic subspaces of $\C R_n$ are not mutually orthogonal in general. This can interfere with a sum-of-squares analysis of the lengths of the projections of $f$, but as we are about to see, if we are careful then it will create the same easily-understood statistics as in Sections \ref{SecRookSpectralGroupoidBasisAssoc} and \ref{SecRookSpectralSemigroupBasisAssoc}. 

In particular, let $E$ be an easily interpretable function for rank-$k$ data. Then $E:R_n\rightarrow \{0,1\}$ and $E(\sigma)=0$ if $\rk(\sigma)\neq k$. Let $\ip{\cdot,\cdot}$ denote the inner product on $\C R_n$ induced by declaring the groupoid basis mutually orthonormal, and let $\ip{\cdot,\cdot}_s$ denote the natural inner product on $\C R_n$. View $f$ as an element of $\C R_n$ under the semigroup basis association or the groupoid basis association, and express $f$ with respect to the groupoid basis. So, if we are using the groupoid basis association, then
\[
f=\sum_{\sigma\in R_n}f(\sigma)\ld \sigma \rd,
\]
and if we are using the semigroup basis association, then
\[
f=\sum_{\sigma\in R_n}g(\sigma) \ld \sigma \rd,
\]
where $g(\sigma) = \sum_{t\geq \sigma} f(t).$ Let $\lambda$ be a partition of $k$ and consider the projection $f^\lambda$. If $E_G$ and $E_S$ denote $E$ viewed as an element of $\C R_n$ under the groupoid basis association and semigroup basis association, respectively, we saw in Section \ref{SecRookSpectralSemigroupBasisAssoc} that $\ip{f^\lambda,E_G} = \ip{f^\lambda,E_S}$. A similar argument 
shows that
$
\ip{f^\lambda,E_S}_s = \ip{f^\lambda,E_G},
$
so we have
\[
\ip{f^\lambda,E_S}_s = \ip{f^\lambda,E_G}=\ip{f^\lambda,E_S}.
\]
In this way we can use the natural inner product to perform rook monoid spectral analysis.

However, in general $\ip{f^\lambda,E_G}_s$ will be different due to interference from terms of rank lower than $k$ in the inner product, as will inner products such as $\ip{f,E_S^\lambda}_s$ and $\ip{f^\lambda,E_S^\lambda}_s$. Here is a simple example. Consider $f\in \C R_2$ by
\[
f= 1 \left(\begin{array}{cc}1&2\\1&2\end{array}\right)
+ 2 \left(\begin{array}{cc}1&2\\2&1\end{array}\right)
+ 4 \left(\begin{array}{cc}1&2\\1&-\end{array}\right)
+ 7 \left(\begin{array}{cc}1&2\\2&-\end{array}\right)
+ 6 \left(\begin{array}{cc}1&2\\-&1\end{array}\right) 
+ 3 \left(\begin{array}{cc}1&2\\-&-\end{array}\right).
\]

In the notation of Section \ref{SecOrthogIsotypic}, let $E=\delta^{\{1\},\{2\}}$, so $E_S=1\left(\begin{array}{cc}1&2\\2&-\end{array}\right)$. Also
\[
E_G = 1\left(\begin{array}{cc}1&2\\2&-\end{array}\right) - 1\left(\begin{array}{cc}1&2\\-&-\end{array}\right).
\]
Denote by $f^1$ and $E_S^1$ the projections of $f$ and $E_S$ onto $V^{(1)}$ (the only isotypic of $\C R_2$ of dimension greater than 1). We have
\[
f^1 = 5 \left(\begin{array}{cc}1&2\\1&-\end{array}\right)
+ 1 \left(\begin{array}{cc}1&2\\-&2\end{array}\right) 
+ 9 \left(\begin{array}{cc}1&2\\2&-\end{array}\right) 
+ 8 \left(\begin{array}{cc}1&2\\-&1\end{array}\right) 
-23 \left(\begin{array}{cc}1&2\\-&-\end{array}\right)
\]
and
\[
E_S^1=1 \left(\begin{array}{cc}1&2\\2&-\end{array}\right) 
-1 \left(\begin{array}{cc}1&2\\-&-\end{array}\right)=E_G.
\]
First, note that $\ip{f^1, E_G} = 9$, which tells us that there are 9 partial rankings in $f$ which map 1 to 2. We also have $\ip{f,E_S^1}_s=4$, which measures how many {\em more} rank-1 partial rankings in $f$ map 1 to 2 than there are null rankings in $f$. 
Also, $\ip{f^1,E_S^1}_s=32=\ip{f^1,E_G}_s$, which measures the total number of partial rankings in $f$ which map 1 to 2 {\em plus} the total number of partial rankings in $f$. In either case we can extract useful statistics (such as the number of rank-1 partial rankings in $f$ which map 1 to 2, or the total number of partial rankings in $f$ which map 1 to 2) by performing the appropriate additions and subtractions. However, such desirable statistics are already available to us, with no additional effort required, if we simply use the inner product $\ip{\cdot,\cdot}$ instead of the natural one. We obtain other benefits by using $\ip{\cdot,\cdot}$ as well, such as orthogonality of isotypic subspaces.

\section{Concluding remarks and open questions}
\label{SecConclusion}

As we have seen, if $S$ is a finite inverse semigroup, then via the groupoid basis of $\C S$, spectral analysis based on $S$ essentially boils down to group-based spectral analysis using the maximal subgroups of $S$.

The essential algebraic component that enables inverse semigroup spectral analysis seems to be the semisimplicity of $\C S$, which allows us to write any element $f$ of any $\C S$-module uniquely as the sum of its isotypic projections. For most finite semigroups $S$, $\C S$ is not semisimple. An intriguing example is $S=T_n$, the full transformation semigroup on $n$ elements. The question of what spectral analysis based on $S$ should mean in such a case remains an open question. In the simplest case, if $f\in \C S$ where $\C S$ is not semisimple, then the collection $\rho(f)$, as $\rho$ varies over a complete set of inequivalent, irreducible representations of $\C S$, does not uniquely determine $f$. What kinds of information do we lose if we only use the irreducible representations of $\C S$ and perform calculations with inner products as in this article? This is really a question about the Jacobson radical of $\C S$ and how it interacts with ``interpretable" functions based on $S$. If the irreducible representations are not enough to capture what statistics we want about a function, should we consider the indecomposable representations instead? And how should we go about understanding the spectral analysis of arbitrary functions (i.e., elements of arbitrary $\C S$-modules) if $S$ has infinite representation type, as is the case for $S=T_n$ for $n> 4$ \cite{TnRepType}? Furthermore, if $S$ is an arbitrary finite semigroup, is there a ``correct" notion of an inner product on $\C S$ in general, analogous to the inner product obtained by declaring the groupoid basis of $\C S$ orthonormal when $S$ is an inverse semigroup? As we have seen, sometimes the natural inner product is not the most useful one.

\section{Acknowledgments}
We thank the anonymous referees for their comments and suggestions, which have helped us improve the organization and presentation of this article.

\appendix
\section{Basic representation theory for inverse semigroups}
\label{AppendixBasicRepTheory}

In this appendix we review the basic definitions from the representation theory of inverse semigroups. These ideas carry over with little or no modification to general semigroups. For a treatment of the representation theory of semigroups in general, see \cite{Rhodes}. Let $S$ be a finite inverse semigroup.

\begin{defn}The {\em complex algebra of $S$}, denoted $\C S$, is (as a vector space) the $\C$-span of the symbols $s\in S$. The multiplication in $\C S$, called {\em convolution} and denoted by $\ast$, is defined by the linear extension of the multiplication in $S$ by the distributive law.
\end{defn}

If $S$ is a group, convolution may be written in the familiar way: If $f,g\in \C S$ with $f=\sum_{s\in S}f(s)s$ and $g=\sum_{s\in S}g(s)s$, then
\[
f\ast g = \sum_{s\in S}\sum_{r\in S} f(r)g(r^{-1}s)s.
\]

If $S$ has an identity $1_S$, then $\C S$ has a multiplicative identity, namely $1\cdot 1_S$. Even if $S$ does not have an identity, $\C S$ does. We can see this from the semisimplicity of $\C S$ \cite{Munn} and the Wedderburn isomorphism (Theorem \ref{ThmWedderburn})---the inverse image of the identity of the algebra on the right of (\ref{EqWedderburn}) is the identity of $\C S$. Denote the identity of $\C S$ by $1_{\C S}$ and the algebra of $n\times n$ matrices over $\C$ by $M_n(\C)$.

\begin{defn}A {\em matrix representation} (or just {\em representation}) $\rho$ of $\C S$ of dimension $d_\rho\in\N$ is a linear map $\rho:\C S\rightarrow M_{d_\rho}(\C)$ for which $\rho(ab)=\rho(a)\rho(b)$ for all $a,b\in \C S$, and for which $\rho(1_{\C S})$ is the identity matrix.
\end{defn}

Equivalently, a representation of $\C S$ is a finite-dimensional $\C$-vector space which is also a unital left $\C S$-module. In this paper we only consider finite-dimensional representations and unital modules, so left $\C S$-modules and representations of $\C S$ are the same.

\begin{defn}Matrix representations $\rho_1,\rho_2$ of $\C S$ are {\em equivalent} if there is an invertible matrix $A$ such that
\[
A\rho_1(x)A^{-1}=\rho_2(x)
\]
for all $x\in \C S$.
\end{defn}

That is, two representations are equivalent if they are isomorphic as left $\C S$-modules.

\begin{defn}A representation $\rho$ of $\C S$ is {\em irreducible} if it is simple as a left $\C S$-module. 
\end{defn}
Equivalently, $\rho$ is irreducible if there do not exist representations $\rho_1,\rho_2$, a matrix valued function $g$, and an invertible matrix $A$ for which
\[
A\rho(x)A^{-1}=\left[
\begin{array}{cc}
\rho_1(x)&0\\
g(x)&\rho_2(x)\\
\end{array}
\right]
\]
for all $x\in \C S$.

\begin{defn}$\C S$ is said to be {\em semisimple} if every left $\C S$-module is equal to a direct sum of simple left $\C S$-modules.
\end{defn}

\section{The groupoid basis of a finite inverse semigroup}

\label{SecGroupoidBasis}

Let $S$ be a finite inverse semigroup. There is a natural partial order on $S$ given by, for $s,t\in S$, $t\leq s$ if and only if $t=es$ for some idempotent $e\in S$ \cite{Mitsch}. Notice that if $S$ is a group, then this partial order is trivial in the sense that $t\leq s$ if and only if $t=s$. 
Recently B.\ Steinberg has used the M\"obius function of this partial order to realize the decomposition of $\C S$ into a direct sum of matrix algebras over group algebras \cite{Steinberg2}. To see how this works, we begin by reviewing the groupoid basis of $\C S$ \cite{Steinberg2}.

\begin{defn}The {\em groupoid basis} of $\C S$ is given by the collection $\{\ld s\rd\}_{s\in S}$, where
\[
\ld s\rd = \sum_{t\in S: t\leq s} \mu(t,s)t,
\]
and $\mu$ is the M\"obius function of the natural partial order on $S$.
\end{defn}

For $x,y\in R_n$, $x\leq y$ if and only if $y$ extends $x$ as a partial function. We can recover the semigroup basis of $\C S$ in terms of the groupoid basis by inverting the M\"obius function:
\[
s=\sum_{t\in S: t\leq s}\ld t \rd.
\]
The groupoid basis is a basis for $\C S$, whose multiplication is given by the following formula \cite{Steinberg2}:

\begin{equation}
\label{GroupoidMult}
\ld s \rd \ld t \rd = 
\begin{cases}
\ld st \rd & \textup{if } s^{-1}s=tt^{-1}, \\
0 & \textup{otherwise}.
\end{cases}
\end{equation}

For $s\in R_n$, $s^{-1}s$ is the partial identity on $\dom(s)$ and $ss^{-1}$ is the partial identity on $\ran(s)$ (keeping in mind that we view maps as acting on the left of sets and that we compose maps from right to left). It follows that for $s,t\in R_n$,
\[
\ld s \rd \ld t \rd = \begin{cases}
\ld st \rd & \textup{if } \dom(s)=\ran(t), \\
0 & \textup{otherwise}
\end{cases}
\]
in $\C R_n$. 

We will also need Green's $\D$-relation \cite{CliffPres, Green, Steinberg2}:

\begin{defn}Let $e,f\in S$ be idempotent. We say $e$ and $f$ are {\em isomorphic} if there is an element $s\in S$ such that $e=s^{-1}s$ and $f=ss^{-1}$. Idempotents $e$ and $f$ are said to be {\em $\D$-related} if they are isomorphic. In general, elements $s,t\in S$ are said to be {\em $\D$-related} if $s^{-1}s$ is isomorphic to $t^{-1}t$.
\end{defn}
The equivalence classes of $S$ under the $\D$-relation are the $\D$-classes of $S$. An equivalent characterization of $\D$ is that $s$ and $t$ are $\D$-related if and only if $s$ and $t$ generate the same two-sided ideal in $S$. 
For $R_n$, the idempotents are the restrictions of the identity map, and two idempotents are isomorphic if and only if they have the same rank. $R_n$ has $n+1$ $\D$-classes. They are $D_0,D_1,\ldots,D_n$, where $D_k$ is the set of elements of $R_n$ of rank $k$.

\begin{defn}A {\em subgroup} of $S$ is a subset of $S$ which is also a group. A subgroup $G$ of $S$ is {\em maximal} if $G$ is not contained in any other subgroup of $S$.
\end{defn}
Given an idempotent $e$ of $S$, there is precisely one maximal subgroup of $S$ containing $e$ \cite{CliffPres}, called {\em the maximal subgroup of $S$ at $e$} and denoted $G_e$. In fact \cite{Steinberg2}
\[
G_e = \{s\in S: s^{-1}s=ss^{-1}=e\},
\]
and $e$ is the identity of $G_e$. If $e$ and $f$ are isomorphic idempotents, it is straightforward to show that $G_e\cong G_f$. For $R_n$, the maximal subgroup at any idempotent $e$ of rank $k$ is isomorphic to $S_k$.

We can now describe Steinberg's decomposition of $\C S$ into a direct sum of matrix algebras over group algebras. Let $D_0,\ldots,D_n$ be the $\D$-classes of $S$. Let $\C D_k$ be the $\C$\textup{-}\textup{span} of $\{\ld s\rd:s\in D_k\}$. From (\ref{GroupoidMult}) it follows that $\C S=\bigoplus_{k=0}^n\C D_k$.
The following theorem can be found in \cite{Steinberg2}.

\begin{thm}
\label{BigThm}
Let $r_k$ indicate the number of idempotents in $D_k$, and let $e_k$ be any idempotent in $D_k$. Denote the maximal subgroup of $S$ at $e_k$ by $G_k$. Then there is an algebra isomorphism
$\phi: \C D_k \rightarrow M_{r_k}(\C G_k).$ 
\end{thm}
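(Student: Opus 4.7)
Proof proposal. The plan is to build $\phi$ explicitly on the groupoid basis of $\C D_k$ using the multiplication rule (\ref{GroupoidMult}), which is essentially groupoid-like, and then verify it is a bijective algebra homomorphism. Fix the idempotents $f_1=e_k, f_2,\ldots,f_{r_k}$ in $D_k$. Since any two idempotents in a $\D$-class are isomorphic, for each $i$ we may choose an element $p_i\in S$ with $p_i^{-1}p_i=e_k$ and $p_ip_i^{-1}=f_i$ (taking $p_1=e_k$). The key technical lemma is the following: if $s\in D_k$ satisfies $s^{-1}s=f_j$ and $ss^{-1}=f_i$, then $p_i^{-1}sp_j\in G_k$. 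This is a direct inverse-semigroup calculation using the identities $p_ip_i^{-1}=f_i=ss^{-1}$, $p_jp_j^{-1}=f_j=s^{-1}s$, and the standard fact that $s=s(s^{-1}s)=(ss^{-1})s$; one checks $(p_i^{-1}sp_j)^{-1}(p_i^{-1}sp_j)=e_k$ and $(p_i^{-1}sp_j)(p_i^{-1}sp_j)^{-1}=e_k$.

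I then define $\phi$ on the basis $\{\ld s\rd:s\in D_k\}$ by
\[
\phi(\ld s\rd)=(p_i^{-1}sp_j)\,E_{ij}\qquad\text{where }ss^{-1}=f_i,\ s^{-1}s=f_j,
\]
that is, the matrix in $M_{r_k}(\C G_k)$ with the group element $p_i^{-1}sp_j$ in the $(i,j)$-entry and $0$'s elsewhere, and extend linearly. Each matrix unit $E_{ij}$ carrying an arbitrary $g\in G_k$ is hit exactly once, namely by $\ld p_igp_j^{-1}\rd$ (a straightforward check using the same manipulations), so $\phi$ is a bijection on bases. Counting dimensions confirms the sizes match: $|D_k|=r_k^2|G_k|=\dim M_{r_k}(\C G_k)$.

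To see $\phi$ is an algebra map, I compare the two sides on basis elements. Take $\ld s\rd,\ld t\rd$ with $ss^{-1}=f_i,s^{-1}s=f_j,tt^{-1}=f_m,t^{-1}t=f_\ell$. The matrix product $\phi(\ld s\rd)\phi(\ld t\rd)=(p_i^{-1}sp_j)E_{ij}\cdot(p_m^{-1}tp_\ell)E_{m\ell}$ vanishes unless $j=m$, which is exactly the condition $s^{-1}s=tt^{-1}$ governing non-vanishing in (\ref{GroupoidMult}). When $j=m$, the product equals
\[
\bigl(p_i^{-1}s\,p_jp_j^{-1}\,tp_\ell\bigr)E_{i\ell}=(p_i^{-1}(st)p_\ell)E_{i\ell},
\]
since $p_jp_j^{-1}=f_j=s^{-1}s$ and $s(s^{-1}s)t=st$. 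A short verification shows $(st)(st)^{-1}=f_i$ and $(st)^{-1}(st)=f_\ell$, so this matches $\phi(\ld st\rd)=\phi(\ld s\rd\ld t\rd)$. Finally, one checks $\phi$ sends the identity $\sum_i\ld f_i\rd$ of $\C D_k$ (as a two-sided summand) to the identity matrix, and the proof is complete.

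The main obstacle is purely bookkeeping: ensuring the index conventions (row = range-idempotent, column = domain-idempotent) are consistent with the composition convention $st$ meaning ``first $t$, then $s$'', and showing cleanly that $p_i^{-1}sp_j$ really lies in $G_k$ rather than merely in some larger subset. Beyond that, the verification is mechanical; no essential difficulty remains once the representatives $p_i$ are fixed and (\ref{GroupoidMult}) is in hand.
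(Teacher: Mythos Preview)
Your proposal is correct and is essentially the same construction the paper describes (following Steinberg): fix representatives $p_a$ with $p_a^{-1}p_a=e_k$, $p_ap_a^{-1}=a$, and send $\ld s\rd$ with $s^{-1}s=e$, $ss^{-1}=f$ to $(p_f^{-1}sp_e)E_{f,e}$. The only difference is cosmetic---you index by integers rather than by the idempotents themselves---and you supply the homomorphism and bijectivity verifications that the paper omits in favor of citing \cite{Steinberg2}.
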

The isomorphism $\phi$ that Steinberg constructs to prove Theorem \ref{BigThm} is given explicitly as follows. For each $\D$-class $D_k$, fix an idempotent $e_k$. For every idempotent $a\in D_k$, fix an element $p_a\in S$ such that $p_a^{-1}p_a=e_k$ and $p_ap_a^{-1}=a$, taking $p_{e_k}=e_k$. It is straightforward to show and important to note that $p_a\in D_k$ (and hence $p_a^{-1}\in D_k$ as well). View the $r_k\times r_k$ matrices as being indexed by pairs of idempotents in $D_k$. Define $\phi$ on the basis $\{\ld s\rd:s\in D_k\}$ of $\C D_k$ in the following manner: for an element $\ld s\rd \in \C D_k$ with $s^{-1}s=e$ and $ss^{-1}=f$,
\[
\phi(\ld s\rd) = {p_f}^{-1}sp_{e} E_{f,e},
\]
where $E_{f,e}$ is the standard $r_k\times r_k$ matrix with a 1 in the $f,e$ position and 0 elsewhere. We have that ${p_f}^{-1}sp_e \in G_k$ by construction, the linear extension of $\phi$ to $\C D_k$ is the isomorphism, and the inverse of $\phi$ is induced by, for $s\in G_k$,
\[
sE_{f,e} \mapsto \ld p_f s {p_e}^{-1}  \rd.
\]
Thus $\C S\cong \bigoplus_{k=0}^n M_{r_k}(\C G_k)$.

This gives us the following powerful method for constructing the irreducible representations of $\C S$ from the irreducible representations of the maximal subgroups of $S$ \cite{Steinberg2}.

\begin{thm}
\label{ThmRepGeneration}
Let $D_0,D_1,\ldots,D_n$ be the $\D$-classes of $S$. For each $k\in \{0,1,\ldots,n\}$, fix an idempotent $e_k\in D_k$. Let $G_k$ be the maximal subgroup of $S$ at $e_k$, and let $\IRR(G_k)$ be any complete set of inequivalent, irreducible matrix representations for $G_k$. Then the irreducible representations of $\C S$ are in one-to-one correspondence with the elements of $\uplus_{k=0}^n \IRR(G_k)$. Specifically, given an irreducible representation $\rho$ of $\C G_k$, form the irreducible representation $\bar\rho$ of $M_{r_k}(\C G_k)$:
\[
\bar\rho(gE_{i,j}) = E_{i,j}\otimes \rho(g)
\]
for $g\in G_k$, where $E_{i,j}$ is the standard $r_k\times r_k$ matrix with a 1 in the $i,j$ position and $0$ elsewhere. Extend $\bar\rho$ linearly to the rest of $M_{r_k}(\C G_k),$ and further extend $\bar\rho$ to $\C S$ by letting it be $0$ on the other summands of $\bigoplus_{k=0}^nM_{r_k}(\C G_k)$. As $\rho$ ranges over $\uplus_{k=0}^n \IRR(G_k)$, the $\bar\rho$ form a complete set inequivalent, irreducible matrix representations of $\C S$.
\end{thm}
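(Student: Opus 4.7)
The plan is to reduce everything to standard facts about the representation theory of direct sums of matrix algebras, using Theorem \ref{BigThm} as the structural input. That theorem provides an algebra isomorphism $\C S \cong \bigoplus_{k=0}^n M_{r_k}(\C G_k)$, and each $\C G_k$ is semisimple by Maschke's theorem, hence so is each $M_{r_k}(\C G_k)$. For any direct sum $A = \bigoplus_k A_k$ of unital algebras, every irreducible (unital) representation of $A$ is obtained from an irreducible representation of a single summand $A_k$ by extending by zero on the others, because the identities $1_{A_k}$ are orthogonal central idempotents summing to $1_A$ and so each must act on any irreducible module as the identity (on exactly one summand) or zero. Thus the bijection with $\uplus_{k=0}^n \IRR(G_k)$ reduces to establishing, for each $k$, a bijection $\IRR(G_k) \leftrightarrow \IRR(M_{r_k}(\C G_k))$.

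Next I would verify that the formula $\bar\rho(gE_{i,j}) = E_{i,j} \otimes \rho(g)$ realizes this bijection. It is a representation by the direct check $(g_1 E_{i,j})(g_2 E_{l,m}) = \delta_{j,l}(g_1 g_2) E_{i,m}$, which matches the corresponding product of $\bar\rho$-values under the standard product on $\C^{r_k \times r_k} \otimes \mathrm{End}(V_\rho)$. Irreducibility of $\bar\rho$ on $V = \C^{r_k} \otimes V_\rho$ follows from irreducibility of $\rho$: starting from any nonzero $v = \sum_i e_i \otimes v_i$, applying $\bar\rho(e_k E_{l,j})$ for appropriate $j$ isolates one nonzero slot and moves it to position $l$, and then applying $\bar\rho(g E_{l,l})$ for $g \in G_k$ sweeps out all of $e_l \otimes V_\rho$ by irreducibility of $\rho$; doing this for every $l$ generates all of $V$. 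Inequivalence of $\bar\rho_1$ and $\bar\rho_2$ for inequivalent $\rho_1, \rho_2$ can be seen by restricting to the subalgebra $\{g E_{1,1} : g \in G_k\}$, from which $\rho$ can be recovered on the subspace $e_1 \otimes V_\rho$, or more cleanly by comparing characters.

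Finally, that every irreducible representation of $M_{r_k}(\C G_k)$ is equivalent to some $\bar\rho$ follows from a dimension count. Wedderburn applied to $\C G_k$ gives $\dim \C G_k = \sum_{\rho \in \IRR(G_k)} d_\rho^2$, so
\[
\dim M_{r_k}(\C G_k) = r_k^2 \sum_{\rho \in \IRR(G_k)} d_\rho^2 = \sum_{\rho \in \IRR(G_k)} (r_k d_\rho)^2.
\]
Since the $\bar\rho$ are pairwise inequivalent irreducibles of dimensions $r_k d_\rho$, Wedderburn applied now to the semisimple algebra $M_{r_k}(\C G_k)$ forces them to exhaust its irreducibles. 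The main obstacle is the careful verification of irreducibility and inequivalence, but both arguments are routine; a slicker alternative is to invoke Morita equivalence between $M_{r_k}(\C G_k)$ and $\C G_k$, with the given formula for $\bar\rho$ being the explicit form of the Morita correspondence on irreducible modules.
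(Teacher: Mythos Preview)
Your argument is correct. The paper itself does not supply a proof of this theorem: it states the result and attributes it to Steinberg \cite{Steinberg2}, so there is no paper proof to compare against in detail. Your proposal supplies exactly the kind of standard verification one would expect: reduce via the isomorphism of Theorem~\ref{BigThm} to a direct sum of matrix algebras over group algebras, use central idempotents to see that irreducibles of a direct sum come from single summands, and then handle $M_{r_k}(\C G_k)$ by a direct check that the $\bar\rho$ are representations, are irreducible, are pairwise inequivalent, and exhaust the irreducibles by a dimension count. The Morita-equivalence remark is a clean conceptual shortcut for the same bijection. One small point of hygiene: in your irreducibility argument you write $\bar\rho(e_k E_{l,j})$ with $e_k$ the identity of $G_k$; you might make explicit that $\rho(e_k)$ is the identity matrix (since $\rho$ is a unital representation of $\C G_k$), so that this operator is indeed $E_{l,j}\otimes I$ as needed.
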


We will use this theorem to help us describe the isotypic subspaces of $\C S$ in terms of the isotypic subspaces of the complex algebras of the maximal subgroups of $S$ in 
Appendix \ref{SecAppendixIsotypics}. 
First, however, we explain what the isomorphism $\phi$ from Theorem \ref{BigThm} translates into when $S=R_n$. 

For a $\D$-class $D_k$ of $R_n$ (that is, the subset of elements of $R_n$ of rank $k$), let us take $e_k \in D_k$ to be the partial identity on $\{1,\ldots,k\}$, that is,
\[
e_k=\left(\begin{array}{ccccccc}
1&2&\cdots&k&k+1&\cdots&n \\
1&2&\cdots&k&-&\cdots&-
\end{array}\right).
\] 
We then have
\[
G_k=\{s\in R_n:\dom(s)=\ran(s)=\{1,2,\ldots,k\}\}.
\]
We identify $G_k$ with the permutation group $S_k$ in the obvious manner.

For an idempotent $a \in D_k$ (that is, a rank-$k$ restriction of the identity map), let us take $p_a$ to be the unique order-preserving bijection from $\{1,2,\ldots,k\}$ to $\dom(a)=\ran(a)$. For an element $s\in R_n$ of rank $k$, define the {\em permutation type} of $s$, $\perm(s)$, to be, informally, the ``arrows'' from $\dom(s)$ to $\ran(s)$, expressed as a permutation in $G_k = S_k$. For example, if 
\[
s=\left(\begin{array}{cccc}
1&2&3&4\\
4&-&1&2
\end{array}\right),
\textup{ then }
\perm(s) = \left(\begin{array}{ccc}
1&2&3\\
3&1&2
\end{array}\right)
\]
because $s$ sends the first element of its domain to the third element of its range, the second element of its domain to the first element of its range, and the third element of its domain to the second element of its range.

Formally, we define
\[
\perm(s)= {p_{ss^{-1}}}^{-1} s p_{s^{-1}s},
\]
where $p_{s^{-1}s}$ is the unique order preserving bijection from $\{1,2,\ldots,k\}$ to $\dom(s)$ and ${p_{ss^{-1}}}^{-1}$ is the unique order preserving bijection from $\ran(s)$ to $\{1,2,\ldots,k\}$.

The isomorphism $\phi$ from Theorem \ref{BigThm} now works as follows. We have $\binom{n}{k}\times \binom{n}{k}$ matrices, so let us index their rows and columns by the $k$-subsets of $\{1,2,\ldots,n\}$. We have
\[
\C D_k \cong M_{\binom{n}{k}}(\C S_k)
\]
where, if $s\in R_n$ has rank $k$, then $\phi(\ld s \rd) = \perm(s)E_{\ran(s),\dom(s)}$. This result was implicit in Munn's work on the rook monoid \cite{MunnCharacters}, and was first written down explicitly by Solomon \cite{Solomon}. Solomon's isomorphism is essentially the same as the one we just described. 
As a corollary, we have:
\begin{cor}
\label{CorRnIsomSks}
$\C R_n\cong \bigoplus_{k=0}^n M_{\binom{n}{k}}(\C S_k).$
\end{cor}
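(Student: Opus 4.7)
The plan is to assemble the corollary from the pieces already in place in Appendix \ref{SecGroupoidBasis}, essentially as an arithmetic accounting of the data recorded there for $S=R_n$. The key inputs are (i) the groupoid-basis decomposition $\C S = \bigoplus_k \C D_k$ coming from the multiplication rule \eqref{GroupoidMult}, (ii) the identification of the $\mathcal{D}$-classes of $R_n$ as $D_0,\ldots,D_n$ with $D_k$ consisting of all rank-$k$ partial rankings, and (iii) Theorem \ref{BigThm}, which gives $\C D_k \cong M_{r_k}(\C G_k)$ for $r_k$ the number of idempotents in $D_k$ and $G_k$ any maximal subgroup at an idempotent of $D_k$.

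First I would observe that the groupoid multiplication rule $\ld s\rd\ld t\rd = \ld st\rd$ when $\dom(s)=\ran(t)$, and $0$ otherwise, sends rank outside $D_k$ to $0$ when multiplying two elements of $\C D_k$ (since a product of two rank-$k$ groupoid basis elements lives in $D_k$ whenever it is nonzero). Hence each $\C D_k$ is a subalgebra and $\C R_n = \bigoplus_{k=0}^n \C D_k$ as algebras. Next I would count idempotents in $D_k$: the idempotents of $R_n$ are exactly the partial identities, and those of rank $k$ are indexed by the $k$-subsets of $\{1,2,\ldots,n\}$, so $r_k = \binom{n}{k}$. Fixing the idempotent $e_k$ to be the partial identity on $\{1,\ldots,k\}$, its maximal subgroup $G_k = \{s\in R_n : s^{-1}s = ss^{-1} = e_k\}$ is the set of partial rankings with domain and range equal to $\{1,\ldots,k\}$, which under the obvious identification is $S_k$.

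Plugging these values into Theorem \ref{BigThm} yields $\C D_k \cong M_{\binom{n}{k}}(\C S_k)$, and summing over $k$ gives
\[
\C R_n = \bigoplus_{k=0}^n \C D_k \;\cong\; \bigoplus_{k=0}^n M_{\binom{n}{k}}(\C S_k),
\]
as claimed. If desired, the explicit isomorphism can be recorded by noting that with the choice $p_a = $ the unique order-preserving bijection from $\{1,\ldots,k\}$ to $\dom(a)=\ran(a)$ for each idempotent $a\in D_k$, the map of Theorem \ref{BigThm} specializes to $\ld s\rd \mapsto \perm(s)\, E_{\ran(s),\dom(s)}$, recovering Solomon's isomorphism.

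There is essentially no obstacle here; the whole content of the corollary is packaged in Theorem \ref{BigThm} together with the identification of the combinatorial invariants of $R_n$ (number of $\mathcal{D}$-classes, number of idempotents per class, structure of maximal subgroups). The only mild care required is checking that the direct sum of the algebra isomorphisms $\C D_k\to M_{\binom{n}{k}}(\C S_k)$ is itself an algebra isomorphism, which is immediate since the $\C D_k$ annihilate each other in $\C R_n$ under the groupoid multiplication and the matrix blocks annihilate each other in the target.
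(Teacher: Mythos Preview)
Your proposal is correct and follows essentially the same route as the paper: the corollary is presented there as an immediate consequence of Theorem~\ref{BigThm} once one records, as you do, that for $R_n$ the $\mathcal D$-classes are the rank classes $D_k$, that $r_k=\binom{n}{k}$, and that $G_k\cong S_k$, together with the groupoid-basis decomposition $\C R_n=\bigoplus_k \C D_k$. Your explicit description of the isomorphism as $\ld s\rd\mapsto \perm(s)\,E_{\ran(s),\dom(s)}$ also matches the paper's account verbatim.
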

\section{Isotypic subspaces and an orthogonal inner product}
\label{SecAppendixIsotypics}

Let $S$ be a finite inverse semigroup. In this appendix we use the results of 
Appendix \ref{SecGroupoidBasis} 
to describe Fourier bases of $\C S$ and isotypic projections in $\C S$ in terms of those of the $\C G$, as $G$ ranges over the maximal subgroups of $S$, and we give an inner product on $\C S$ under which the isotypic subspaces of $\C S$ are mutually orthogonal.

As in 
Appendix \ref{SecGroupoidBasis},
let $D_0,\ldots,D_n$ be the $\D$-classes of $S$, let $r_k$ denote the number of idempotents in $D_k$, pick an idempotent $e_k$ in each $\D$-class $D_k$, and let $G_k$ be the maximal subgroup of $S$ at $e_k$. Then by Theorem \ref{ThmRepGeneration}, the isotypic subspaces of $\C S$ are in one-to-one correspondence with the isotypic subspaces of the $\C G_k$, as $k$ ranges from $0$ to $n$. Also, as in the isomorphism from Theorem \ref{BigThm}, for every idempotent $a\in D_k$, fix an element $p_a\in S$ such that ${p_a}^{-1}p_a=e_k$ and $p_a{p_a}^{-1}=a$ (and take $p_{e_k}=e_k$). Let $\IRR(G_k)$ be a complete set of inequivalent, irreducible matrix representations of $\C G_k$. For each $\rho \in \IRR(G_k)$, let $\bar \rho$ denote its extension (as in Theorem \ref{ThmRepGeneration}) to $\bigoplus_{k=0}^nM_{r_k}(\C G_k)$, and hence to $\C S$. Let $\Y=\{\bar\rho: \rho \in \uplus_{k=0}^n \IRR(G_k)\}$, so that $\Y$ is a complete set of inequivalent, irreducible matrix representations of $\C S$.

We begin to describe the isotypic subspaces of $\C S$ by describing the Fourier basis for $\C S$ according to $\Y$ in terms of Fourier bases of the $\C G_k$: If $B\subseteq \C S$ is the set of inverse images of the natural basis of $\bigoplus_{\bar \rho \in \Y}M_{d_{\bar \rho}}(\C)$ in the Wedderburn isomorphism 
\begin{equation}
\bigoplus_{\bar\rho \in \Y}\bar\rho:
\C S \rightarrow \bigoplus_{\bar\rho \in \Y}M_{d_{\bar \rho}}(\C)
\label{FourierIsomS}
,\end{equation}
then for each $y \in B$,
\[y = \sum_{s\in S}y(s)\ld s\rd. \]
We will describe the coefficients $y(s)$.

Suppose we already have an explicit description of a Fourier basis for $\C G_k$ for each $k \in \{0,\ldots,n\}$. That is, if $C$ is the set of inverse images of the natural basis of the algebra on the right in the isomorphism
\begin{equation}
\bigoplus_{\rho \in \IRR(G_k)}\rho:
\C G_k \rightarrow \bigoplus_{\rho \in \IRR(G_k)}M_{d_{\rho}}(\C),
\label{FourierIsomGk}
\end{equation}
then, for each $c\in C$,
\[
c = \sum_{x\in G_k}c(x)x.
\] 
We will describe the coefficients $y(s)$ in terms of the $c(x)$.

Let $\rho \in \IRR(G_k)$, and let $c_{i,j} \in \C G_k$,
\[
c_{i,j} = \sum_{x\in G_k}c_{i,j}(x)x,
\]
be the inverse image in the isomorphism (\ref{FourierIsomGk}) of the element of 
$
\bigoplus_{\rho \in \IRR(G_k)}M_{d_{\rho}}(\C)
$
that is $1$ in the $i,j$ position in the $\rho$ block and $0$ elsewhere. $\bar\rho$ maps to block matrices whose rows and columns are indexed by the idempotents in $D_k$, and whose entries are themselves $d_\rho \times d_\rho$ matrices. 
We have the following description of a Fourier basis for $\C S$, which generalizes the description for $\C R_n$ given in \cite{RookFFT}.

\begin{thm}
\label{FourierBasisSThm}
Let $X$ be a $d_\rho \times d_\rho$ matrix with a $1$ in the $i,j$ position and $0$ elsewhere. For idempotents $a,b\in D_k$, let $E_{b,a}$ be an $r_k \times r_k$ matrix with a $1$ in the $b,a$ position and $0$ elsewhere. The inverse image in the isomorphism \textup{(\ref{FourierIsomS})} of the element of $\bigoplus_{\bar \rho \in \Y}M_{d_{\bar \rho}}(\C)$ that is $E_{b,a}\otimes X$ in the $\bar \rho$ block and $0$ elsewhere is
\[
\ld p_{b} \rd \left(\sum_{x\in G_k}c_{i,j}(x)\ld x \rd \right)
\ld {p_a}^{-1} \rd
.\]

\end{thm}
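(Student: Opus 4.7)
The plan is to push the proposed element through the composition of Steinberg's isomorphism $\phi:\C S\to\bigoplus_k M_{r_k}(\C G_k)$ from Theorem \ref{BigThm} with the block-diagonal Wedderburn isomorphism of each $M_{r_k}(\C G_k)$ induced by $\bar\rho$, and to verify that it lands in the claimed matrix unit. Since both $\phi$ and the $\bar\rho$ are explicit on the groupoid basis, this reduces the whole statement to a direct algebraic computation involving the groupoid multiplication rule and the defining property of $c_{i,j}$.

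The first step I would carry out is to simplify the product
\[
\ld p_b\rd\,\ld x\rd\,\ld p_a^{-1}\rd
\]
using the formula $\ld s\rd\ld t\rd=\ld st\rd$ when $s^{-1}s=tt^{-1}$ and $0$ otherwise. Since $p_b^{-1}p_b=e_k$, $xx^{-1}=e_k$, $x^{-1}x=e_k$, and $(p_a^{-1})(p_a^{-1})^{-1}=p_a^{-1}p_a=e_k$, the two compatibility conditions are met and the product collapses to $\ld p_b x\,p_a^{-1}\rd$. A quick calculation then gives $(p_b x p_a^{-1})^{-1}(p_b x p_a^{-1})=p_a e_k p_a^{-1}=a$ and $(p_b x p_a^{-1})(p_b x p_a^{-1})^{-1}=p_b e_k p_b^{-1}=b$, so by the explicit formula for $\phi$ in Appendix \ref{SecGroupoidBasis},
\[
\phi\bigl(\ld p_b x p_a^{-1}\rd\bigr)=p_b^{-1}(p_b x p_a^{-1})p_a\,E_{b,a}=x\,E_{b,a}\in M_{r_k}(\C G_k).
\]

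The second step is to apply $\bar\rho$ from Theorem \ref{ThmRepGeneration} and sum over $x\in G_k$ with the coefficients $c_{i,j}(x)$. Using $\bar\rho(g E_{b,a})=E_{b,a}\otimes\rho(g)$ and linearity,
\[
\bar\rho\Bigl(\sum_{x\in G_k}c_{i,j}(x)\,xE_{b,a}\Bigr)=E_{b,a}\otimes\sum_{x\in G_k}c_{i,j}(x)\rho(x)=E_{b,a}\otimes\widehat{c_{i,j}}(\rho).
\]
By the defining property of the Fourier basis element $c_{i,j}\in\C G_k$ under the isomorphism (\ref{FourierIsomGk}), $\widehat{c_{i,j}}(\rho)$ is the matrix unit $X$ with a $1$ in the $i,j$ position and $0$ elsewhere, while $\widehat{c_{i,j}}(\rho')=0$ for any other $\rho'\in\IRR(G_k)$.

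Finally, I would check that the image in every other block of $\bigoplus_{\bar\sigma\in\Y}M_{d_{\bar\sigma}}(\C)$ is zero. For $\rho'\in\IRR(G_k)$ not equivalent to $\rho$ this is the previous sentence. For representations coming from a different $\D$-class $D_{k'}$ with $k'\neq k$, the element $\sum_x c_{i,j}(x)\,xE_{b,a}$ sits in the $k$-th summand of $\bigoplus_{k}M_{r_k}(\C G_k)$, and by construction in Theorem \ref{ThmRepGeneration} the extended representation $\bar\rho'$ vanishes on the other summands. Hence the full image in (\ref{FourierIsomS}) is $E_{b,a}\otimes X$ in the $\bar\rho$ block and $0$ elsewhere, which is exactly the claim. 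I do not expect any serious obstacle here; the only subtlety is keeping track of the conventions $s^{-1}s=\mathrm{dom}$-idempotent versus $ss^{-1}=\mathrm{ran}$-idempotent (and the corresponding $p_a$ versus $p_a^{-1}$) so that the indices on $E_{b,a}$ come out correctly rather than transposed.
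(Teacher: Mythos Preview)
Your proof is correct and follows essentially the same route as the paper's own argument: both push the proposed element through the Wedderburn isomorphism block by block and verify that only the $\bar\rho$ block survives, where it yields $E_{b,a}\otimes X$. The only cosmetic difference is the order of operations---you first collapse $\ld p_b\rd\,\ld x\rd\,\ld p_a^{-1}\rd$ to the single groupoid basis element $\ld p_b x p_a^{-1}\rd$ and then apply $\phi$, whereas the paper keeps the three factors separate and uses multiplicativity of $\bar\rho$ on each; your organization is arguably a bit cleaner, but the underlying computation is the same.
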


\begin{proof}
Suppose $\bar \gamma \in \Y$, $\bar \gamma \neq \bar \rho$, and $\gamma \in \IRR(G_k)$. Then
\[
\bar \gamma \left( \ld p_{b} \rd \left(\sum_{x\in G_k}c_{i,j}(x)\ld x \rd \right)
\ld {p_a}^{-1} \rd \right) = 0
\]
(where $0$ indicates the zero matrix), because
\begin{align*}
\bar \gamma \left(\sum_{x\in G_k}c_{i,j}(x)\ld x\rd \right)
&= E_{e_k,e_k}\otimes\left(\sum_{x\in G_k}c_{i,j}(x)\gamma(x)\right) \\&= E_{e_k,e_k}\otimes 0 \\&= 0. 
\end{align*}
Suppose now that $\bar\gamma\in\Y$ 
 and $\gamma \in \IRR(G_j)$ with $j\neq k$. Then
\[
\bar \gamma \left( \ld p_{b} \rd \left(\sum_{x\in G_k}c_{i,j}(x)\ld x \rd \right)
\ld {p_a}^{-1} \rd \right) = 0
\]
because
\begin{align*}
\bar \gamma \left(\sum_{x\in G_k}c_{i,j}(x)\ld x\rd \right)
&=
\sum_{x\in G_k}c_{i,j}(x)\bar\gamma(\ld x \rd)\\ &= \sum_{x\in G_k}c_{i,j}(x) [0] \\&= 0.
\end{align*}
Finally,
\begin{align*}
\bar \rho & \left(\ld p_{b} \rd \left(\sum_{x\in S_k}c_{i,j}(x)\ld x \rd \right)
\ld {p_a}^{-1} \rd \right) \\ 
=&\bar\rho\left((p_{p_b {p_b}^{-1}})^{-1} p_b p_{{p_b}^{-1}p_b} E_{p_b {p_b}^{-1},{p_b}^{-1} p_b}\right) \cdot
\bar\rho\left(\sum_{x\in G_k}c_{i,j}(x)\ld x\rd \right) \cdot
\\
&\bar\rho\left((p_{{p_a}^{-1} p_a})^{-1} {p_a}^{-1} p_{p_a {p_a}^{-1}} E_{{p_a}^{-1} p_a,p_a {p_a}^{-1}}\right) \\ 
=&\bar\rho\left({p_b}^{-1}p_b p_{e_k} E_{b,e_k}\right)\left(E_{e_k,e_k}\otimes\rho\left(\sum_{x\in G_k}c_{i,j}(x)x \right) \right)\bar\rho\left({p_{e_k}}^{-1}{p_a}^{-1} p_a E_{e_k,a}\right)\\
=&\bar\rho\left(e_k p_{e_k} E_{b,e_k}\right) \left(E_{e_k,e_k}\otimes X\right) \bar\rho\left(p_{e_k} e_k E_{e_k,a}\right) \\
=&\bar\rho\left(e_k E_{b,e_k}\right) \left(E_{e_k,e_k}\otimes X\right) \bar\rho\left(e_k E_{e_k,a}\right) \\
=&\left(E_{b,e_k}\otimes I_{d_\rho}\right) \left(E_{e_k,e_k}\otimes X\right) \left(E_{e_k,a} \otimes I_{d_\rho}\right)\\
=&E_{b,a}\otimes X.
\end{align*}
\end{proof}

Note that the Fourier basis element of $\C S$ in Theorem \ref{FourierBasisSThm}, when expressed in terms of the groupoid basis, has nonzero coefficients only for elements $\ld s \rd$ for which $s\in D_k$, as $p_b\in D_k$, $p_a^{-1}\in D_k$, and $x\in D_k$ for all $x\in G_k$. 

For $\C R_n$, given a Fourier basis $B=\{\sum_{\sigma\in S_k}b_i(\sigma)\sigma\}_{i=1}^{|B|}$ for an isotypic subspace $V^\lambda$ for $\C S_k$, where $\lambda$ is a partition of $k$, we obtain a basis for the corresponding isotypic subspace $V^\lambda$ of $\C R_n$ by forming the products
\[
\ld p_{b}\rd 
\left( \sum_{\sigma\in S_k}b_i(\sigma)\ld \sigma \rd \right)
\ld p_{a}^{-1} \rd,
\]
as $a$ and $b$ range over the size-$k$ subsets of $\{1,2,\ldots,n\}$ (where $p_b$ is the unique order-preserving bijection from $\{1,2,\ldots,k\}$ to $b$ and $p_a^{-1}$ is the unique order-preserving bijection from $a$ to $\{1,2,\ldots, k\}$) and the term in the middle ranges over the elements of $B$.

We can now describe the isotypic projections of an element $f\in \C S$ in terms of isotypic projections in the $\C G_k$.

\begin{thm}
\label{ThmUnderstandingProjections}
Let $a,b\in D_k$ be idempotent. Let $f\in \C S$ have the form
\[
f=\sum_{s\in S}f(s)\ld s \rd,
\]
where $f(s)=0$ unless $s\in D_k$, $s^{-1}s=a$, and $ss^{-1}=b$. Let $f_G$ be $f$ viewed as an element of $\C G_k$, i.e.,
\[
f_G=\sum_{s\in D_k:ss^{-1}=b,s^{-1}s=a}f(s) p_b^{-1}sp_a.
\]
Let $\IRR(G_k)=\{\rho_1,\ldots,\rho_q\}$. For $i\in \{1,\ldots,q\}$, denote the isotypic subspace of $\C G_k$ corresponding to $\rho_i$ by $W_i$ and the isotypic subspace of $\C S$ corresponding to $\bar{\rho_i}$ by $V_i$.
Let $f_G^1,\ldots,f_G^q$ denote the projections of $f_G$ onto $W_1,\ldots,W_q$, and suppose
\[
f_G^i=\sum_{g\in G_k}c_i(g)g.
\]
Then
\[
f=\sum_{i=1}^q \ld p_b \rd f_G^i \ld p_a^{-1} \rd,
\]
$\ld p_b \rd f_G^i \ld p_a^{-1} \rd$ is the projection of $f$ onto $V_i$, and
\[
\ld p_b \rd f_G^i \ld p_a^{-1} \rd = \ld p_b \rd 
\left( \sum_{g\in G_k}c_i(g)\ld g \rd\right)
\ld p_a^{-1} \rd.
\]
\end{thm}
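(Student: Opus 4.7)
My plan is to push everything through Steinberg's isomorphism $\phi\colon \C D_k \to M_{r_k}(\C G_k)$ from Theorem \ref{BigThm}, decompose on the matrix-algebra side (where the isotypic decomposition is transparent and componentwise), and pull back via $\phi^{-1}$. The hypothesis on $f$ is tailor-made for this: since $f$ is supported on $\{s\in D_k : ss^{-1}=b,\ s^{-1}s=a\}$, the explicit formula for $\phi$ on the groupoid basis gives $\phi(\ld s\rd) = p_b^{-1}sp_a\,E_{b,a}$ for each such $s$. Summing, $\phi(f)$ lands in the single matrix position $(b,a)$:
\[
\phi(f) \;=\; f_G\,E_{b,a},
\]
where $f_G$ is precisely the element of $\C G_k$ defined in the statement.

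Next I would use the description of the $\bar\rho_i$-isotypic of $\C S$ under $\phi$. Composing the isomorphism of Theorem \ref{BigThm} with the Wedderburn isomorphism for $\C G_k$ yields $M_{r_k}(\C G_k) \cong \bigoplus_\rho M_{r_k d_\rho}(\C)$, and under this composition the $\bar\rho_i$-isotypic consists exactly of those $r_k\times r_k$ matrices over $\C G_k$ whose entries all lie in $W_i \subseteq \C G_k$. Consequently, the projection of $\phi(f)=f_G E_{b,a}$ onto this isotypic is simply $f_G^i\,E_{b,a}$, since the matrix has only one potentially nonzero entry. Applying the explicit inverse $\phi^{-1}(gE_{b,a})=\ld p_b g p_a^{-1}\rd$ and extending linearly shows that the projection of $f$ onto $V_i$ equals $\sum_{g\in G_k} c_i(g)\,\ld p_b g p_a^{-1}\rd$; summing over $i$ gives the decomposition $f=\sum_i \ld p_b \rd f_G^i \ld p_a^{-1}\rd$ once the final identity below is established.

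It then remains to verify that $\sum_g c_i(g)\,\ld p_b g p_a^{-1}\rd = \ld p_b\rd f_G^i \ld p_a^{-1}\rd$. Since the natural partial order on the group $G_k$ is trivial, we have $f_G^i = \sum_g c_i(g)\,\ld g\rd$ in $\C G_k$. Applying the groupoid multiplication rule (\ref{GroupoidMult}) twice in $\C S$ finishes the job: $\ld p_b\rd \ld g\rd = \ld p_b g\rd$ because $p_b^{-1}p_b = e_k = gg^{-1}$, and then $\ld p_b g\rd \ld p_a^{-1}\rd = \ld p_b g p_a^{-1}\rd$ because $(p_b g)^{-1}(p_b g) = g^{-1}e_k g = e_k = p_a^{-1}p_a$.

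I do not foresee any serious obstacle; every step is a direct consequence of Theorems \ref{BigThm} and \ref{ThmRepGeneration}. The only mild subtlety is notational: for $g\in G_k$, the element $g\in \C S$ is not in general equal to $\ld g\rd \in \C S$ (since $g$ may have strict predecessors in $S$), so one must be careful to view $f_G^i$ in the groupoid basis of $\C G_k$ (which coincides with its natural basis) before multiplying by $\ld p_b\rd$ and $\ld p_a^{-1}\rd$. With that convention observed, the bookkeeping through $\phi$ and $\phi^{-1}$ is routine.
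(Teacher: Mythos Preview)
Your approach via Steinberg's isomorphism $\phi$ is essentially the paper's approach, just organized on the matrix-algebra side rather than directly in $\C S$; the paper likewise rewrites $f$ via the isomorphism of Theorem~\ref{BigThm}, decomposes $f_G$ into its isotypic pieces $f_G^i$, and then invokes Theorem~\ref{FourierBasisSThm} to see that $\ld p_b\rd f_G^i \ld p_a^{-1}\rd\in V_i$. Your identification of the $\bar\rho_i$-isotypic in $M_{r_k}(\C G_k)$ with $M_{r_k}(W_i)$ is exactly what Theorem~\ref{FourierBasisSThm} encodes, so the two arguments are reorderings of the same computation.

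There is, however, one genuine soft spot in your write-up, precisely at the ``mild subtlety'' you flag. You want to conclude
\[
\ld p_b\rd\, f_G^i\, \ld p_a^{-1}\rd \;=\; \ld p_b\rd\Bigl(\sum_{g\in G_k} c_i(g)\,\ld g\rd\Bigr)\ld p_a^{-1}\rd
\]
as an identity in $\C S$, where on the left $f_G^i=\sum_g c_i(g)\,g$ is embedded via the \emph{semigroup} basis. Noting that $g=\ld g\rd$ \emph{in $\C G_k$} does not justify replacing $g$ by $\ld g\rd$ \emph{in $\C S$}, since the inclusion $\C G_k\hookrightarrow \C S$ respects the semigroup basis, not the groupoid basis. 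The paper closes this gap with Steinberg's mixed product rule (equation~(\ref{SteinbergInteractionMult})): $\ld s\rd\, t=\ld st\rd$ whenever $s^{-1}s\leq tt^{-1}$, which immediately gives $\ld p_b\rd\, g=\ld p_b g\rd$ and then $\ld p_b g\rd\, p_a^{-1}=\ld p_b g p_a^{-1}\rd$. Equivalently, in your language: writing $g=\sum_{t\leq g}\ld t\rd$ in $\C S$ and applying (\ref{GroupoidMult}), every term with $t<g$ is killed by $\ld p_b\rd$ because such $t$ satisfy $tt^{-1}<e_k=p_b^{-1}p_b$. Either way, one extra line is needed; your ``convention'' does not by itself supply it.
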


In general, if $f\in \C S$, then $f$ may be written as a sum of elements of $\C S$ of the form in the hypothesis of this theorem. We can then understand the projections of $f$ in terms of the projections of these elements.

\begin{proof}[Proof of Theorem \ref{ThmUnderstandingProjections}]
By the isomorphism in Theorem \ref{BigThm}, we can write
\[
f=\sum_{g\in G_k}f(p_bgp_a^{-1})\ld p_b g p_a^{-1} \rd
=\ld p_b \rd
\left(\sum_{g\in G_k}f(p_bgp_a^{-1}) \ld g\rd \right)
\ld p_a^{-1} \rd
\]
and
\[
f_G=\sum_{g\in G_k}f(p_bgp_a^{-1})g.
\]
Proposition 4.3 of \cite{Steinberg2} states that, for $s,t\in S$,
\begin{equation}
\ld s \rd t = 
\begin{cases}
\ld s t \rd & \textup{if } s^{-1}s \leq tt^{-1},\\
0 & \textup{otherwise}.
\end{cases}
\label{SteinbergInteractionMult}
\end{equation}
From this we see that
\[
f= \ld p_b \rd f_G \ld p_a^{-1}\rd,
\]
and since
\[
f_G=f_G^1+\cdots +f_G^q,
\]
we have
\[
f=\ld p_b \rd f_G^1 \ld p_a^{-1} \rd + \ld p_b \rd f_G^2 \ld p_a^{-1} \rd+ \cdots + \ld p_b \rd f_G^q \ld p_a^{-1} \rd.
\]
Next, since
\[
f^i_G = \sum_{g\in G_k} c_i(g)g,
\]
applying (\ref{SteinbergInteractionMult}) to $\ld p_b \rd f_G^i \ld p_a^{-1} \rd$ yields that
\[
\ld p_b \rd f_G^i \ld p_a^{-1} \rd=
\ld p_b \rd 
\left(\sum_{g\in G_k} c_i(g)\ld g\rd\right)
\ld p_a^{-1} \rd.
\]
Finally, if we choose a Fourier basis of $\C G_k$ and write the $f_G^i$ in terms of this Fourier basis, equation 
(\ref{SteinbergInteractionMult}) and Theorem \ref{FourierBasisSThm} say that for all $i$, $\ld p_b \rd f_G^i \ld p_a^{-1} \rd \in V_i$. That is, $\ld p_b \rd f_G^i \ld p_a^{-1} \rd$ is the projection of $f$ onto $V_i$, as claimed.
\end{proof}

Finally, we give an inner product on $\C S$ under which its isotypic subspaces are mutually orthogonal.

\begin{thm}
\label{OrthogInnerProductS}
Let $\ip{\cdot,\cdot}$ be the sesquilinear form on $\C S$ induced by, for $s,t\in S$,
\[
\ip{\ld s \rd,\ld t \rd} = \begin{cases}
1 & \textup{if } s=t, \\
0 & \textup{otherwise.}\end{cases}
\]
Then, with respect to this inner product, the isotypic subspaces of $\C S$ are mutually orthogonal.\end{thm}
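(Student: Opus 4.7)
The plan is to use the explicit description of the isotypic subspaces of $\C S$ coming from the decomposition $\C S \cong \bigoplus_{k=0}^n M_{r_k}(\C G_k)$ of Theorems \ref{BigThm} and \ref{ThmRepGeneration}, and reduce orthogonality in $\C S$ to two facts: (a) supports in the groupoid basis of elements coming from different $\D$-classes are disjoint, and (b) isotypic subspaces of a finite group algebra are mutually orthogonal under the natural (group-element orthonormal) inner product.

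First I will write a general element of an isotypic subspace in a useful form. Fix $\rho\in\IRR(G_k)$, let $W_\rho\subseteq\C G_k$ be the $\rho$-isotypic of $\C G_k$, and let $V_{\bar\rho}\subseteq\C S$ be the $\bar\rho$-isotypic. Combining Theorem \ref{ThmRepGeneration} with the inverse of Steinberg's isomorphism from Theorem \ref{BigThm} (precisely as in Theorem \ref{ThmUnderstandingProjections} applied blockwise), $V_{\bar\rho}$ consists exactly of the elements
\[
v \;=\; \sum_{b,\,a} \ld p_b \rd\, g_{b,a}\, \ld p_a^{-1} \rd,
\]
where $(b,a)$ ranges over ordered pairs of idempotents in $D_k$ and each $g_{b,a}$ lies in $W_\rho$.

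Next I will rewrite $v$ in the groupoid basis. Using the multiplication rule of Appendix \ref{SecGroupoidBasis}, a short check yields $\ld p_b \rd \ld x \rd \ld p_a^{-1} \rd = \ld p_b x p_a^{-1}\rd$ for every $x \in G_k$, since the compatibility conditions $p_b^{-1} p_b = xx^{-1} = e_k$ and $(p_b x)^{-1}(p_b x) = p_a^{-1}p_a = e_k$ both hold. Writing $g_{b,a} = \sum_{x\in G_k} g_{b,a}(x)\, x$ (and noting that semigroup and groupoid bases coincide on a group), we obtain
\[
v \;=\; \sum_{(b,a)} \sum_{x\in G_k} g_{b,a}(x)\,\ld p_b x p_a^{-1} \rd.
\]
The key combinatorial observation, already implicit in the isomorphism $\C D_k\cong M_{r_k}(\C G_k)$, is that $(b,a,x)\mapsto p_b x p_a^{-1}$ is a bijection from triples (with $b,a$ idempotent in $D_k$ and $x\in G_k$) onto $D_k$. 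In particular, the $\ld p_b x p_a^{-1}\rd$ appearing above are pairwise distinct groupoid basis elements, all lying in $D_k$.

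Finally I will pair two isotypic elements. Let $v\in V_{\bar\rho}$ and $v'\in V_{\bar{\rho'}}$ with $\rho\in\IRR(G_k)$, $\rho'\in\IRR(G_\ell)$, and $\bar\rho\neq\bar{\rho'}$. If $k\neq\ell$, the two expansions are supported on the disjoint sets $D_k$ and $D_\ell$, so orthonormality of the groupoid basis immediately forces $\ip{v,v'}=0$. If $k=\ell$, the bijection above makes every pairing $\ip{\ld p_b x p_a^{-1}\rd,\ld p_{b'}x'p_{a'}^{-1}\rd}$ vanish unless $(b,a,x)=(b',a',x')$, and the computation collapses to
\[
\ip{v,v'} \;=\; \sum_{(b,a)}\, \sum_{x\in G_k} g_{b,a}(x)\,\overline{g'_{b,a}(x)} \;=\; \sum_{(b,a)} \ip{g_{b,a},\, g'_{b,a}}_{\C G_k},
\]
where the inner product on the right is the natural one on $\C G_k$. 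Since $g_{b,a}\in W_\rho$ and $g'_{b,a}\in W_{\rho'}$ with $\rho\neq\rho'$, the classical mutual orthogonality of isotypics in a finite group algebra (\cite[Chapter 2]{Serre}) forces every summand to vanish, completing the proof. The main obstacle is essentially bookkeeping: verifying that Steinberg's isomorphism and the groupoid multiplication align block-by-block so that the groupoid-orthonormal inner product on $\C S$ genuinely decomposes as a direct sum of natural inner products on the $\C G_k$; once that alignment is made, the group case supplies the conclusion.
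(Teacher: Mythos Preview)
Your proof is correct and follows essentially the same route as the paper's: both use the Steinberg decomposition $\C S\cong\bigoplus_k M_{r_k}(\C G_k)$ via the groupoid basis to see that (i) isotypics attached to distinct $\D$-classes have disjoint groupoid-basis supports, and (ii) within a fixed $\D$-class the pairing collapses block-by-block to the natural inner product on $\C G_k$, where the classical orthogonality of isotypics in a group algebra finishes the job. The only cosmetic difference is that the paper reduces first to Fourier basis elements (via Theorem~\ref{FourierBasisSThm}) and then checks orthogonality pairwise, whereas you work directly with arbitrary isotypic elements written as $\sum_{b,a}\ld p_b\rd\, g_{b,a}\,\ld p_a^{-1}\rd$; the underlying computation and the bijection $(b,a,x)\mapsto p_b x p_a^{-1}$ onto $D_k$ are identical in both arguments.
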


\begin{proof}
By linearity, it suffices to show that $\ip{v,v'}=0$ in the case that $v$ and $v'$ are Fourier basis elements of $\C S$ in distinct isotypic subspaces. 
We may assume that $v$ and $v'$ are part of a Fourier basis for $\C S$ according to $\Y$. Let $\bar\rho,\bar\rho'\in \Y$. Let $v\in V_{\bar\rho}$ and $v'\in V_{\bar\rho'}$, with $V_{\bar\rho} \neq V_{\bar\rho'}$ (and hence $\bar\rho\neq\bar\rho'$).

We know that
\[
\C G_k = \bigoplus_{\rho \in \IRR(G_k)}W_\rho
\]
where $W_\rho$ is the sum of all the irreducible submodules of $\C G_k$ isomorphic to the representation $\rho$. If $w\in W_\rho, w' \in W_{\rho '},$ and $W_\rho \neq W_{\rho '},$ then under the inner product $[\cdot,\cdot]$ on $\C G_k$ defined by
\[
[w,w']=[\sum_{s\in G_k}w(s)s,\sum_{s\in G_k}w'(s)s] = \sum_{s\in G_k} w(s)\overline{w'(s)},
\]
it follows from the discussion in Chapter 2 of \cite{Serre} that we have $[w,w'] = 0$.

Now, suppose that ${\bar\rho}$ is the extension of $\rho \in \IRR(G_k)$ and that ${\bar\rho'}$ is the extension of $\rho' \in \IRR(G_j)$. By Theorem \ref{FourierBasisSThm}, when written in terms of the groupoid basis, $v$ contains nonzero coefficients only for the elements $\ld s\rd$ such that $s \in D_k$, and $v'$ contains nonzero coefficients only for the elements $\ld s\rd$ such that $s \in D_j$. Thus, if $k \neq j$, we have $\ip{v,v'} = 0$. Suppose then that $k=j$. By Theorem \ref{FourierBasisSThm}, we have
\begin{align*}
v =& \ld p_b \rd   \left(\sum_{s \in G_k} v(s)\ld s\rd \right)   \ld {p_a}^{-1} \rd, \\
v'=& \ld p_{b'} \rd \left( \sum_{s \in G_k} v'(s)\ld s\rd  \right) \ld {p_{a'}}^{-1} \rd,
\end{align*}
for $b,a,b',a'$ some idempotents in in $D_k$, and
\[
\sum_{s \in G_k} v(s)s \in W_\rho, \, 
\sum_{s \in G_k} v'(s)s \in W_{\rho '}
\] 
some Fourier basis elements for $\C G_k$. 

If $a \neq a'$ or $b \neq b'$, it is apparent that $\ip{v,v'} = 0$, so suppose further that $a=a'$ and $b=b'$.

Now, since ${\bar \rho} \neq {\bar \rho '}$ and $k=j$, we have $\rho \neq \rho'$, and we therefore note that
\[
[\sum_{s \in G_k} v(s)s, \sum_{s \in G_k} v'(s)s] = 0.
\]
Now, we have
\[
\ip{v,v'} = \sum_{s\in G_k} \sum_{t \in G_k} v(s)\overline{v'(t)} \ip{\ld p_b s {p_a}^{-1} \rd , \ld p_b t {p_a}^{-1} \rd},
\]
and, since $s,t\in G_k$, $\ld p_b s {p_a}^{-1} \rd = \ld p_b t {p_a}^{-1} \rd$ if and only if $s = t$, so
\[
\ip{v,v'} = \sum_{s \in G_k} v(s) \overline{v'(s)}
=[\sum_{s \in G_k} v(s),\sum_{s \in G_k} v'(s)]
=0.
\]
\end{proof}

%
%
%
%


\bibliographystyle{plain}
\bibliography{RookPRDbib}

\end{document}